\documentclass[12pt]{amsart}
\usepackage[
  dvipdfmx,
  colorlinks=true,
  linkcolor=black,
  citecolor=black,
  urlcolor=blue
]{hyperref}
\usepackage{color}
\usepackage{amsmath,amssymb,amsthm}
\usepackage{amscd}
\usepackage{comment}
\usepackage{overpic}
\usepackage{tabularx}
\usepackage[dvipdfmx,left=25truemm,right=25truemm,top=30truemm,bottom=30truemm]{geometry}
\usepackage{bm}

\newtheorem{theorem}{Theorem}[section]

\newtheorem{proposition}[theorem]{Proposition}
\newtheorem{corollary}[theorem]{Corollary}

\theoremstyle{definition} 
\newtheorem{remark}[theorem]{Remark}

\newcommand{\textR}[1]{#1}

\newcommand{\R}{\mathbb{R}}

\renewcommand{\epsilon}{\varepsilon}

\renewcommand{\geq}{\geqslant}

\makeatletter
\@addtoreset{equation}{section}

\makeatother
\begin{document}

\title[A Monge normal form of a surface in $\R^4$ and its applications]
{A curvature-ellipse-based Monge normal form of a regular surface in $\R^4$ and its applications}

\author[M.~Hasegawa]
{Masaru Hasegawa}

\address[Masaru Hasegawa]{%
Department of Information Science, Center for Liberal Arts and Sciences, Iwate Medical University,
1-1-1 Idaidori, Yahaba-cho, Shiwa-gun, Iwate 028-3694, Japan.}
\email{mhase@iwate-med.ac.jp}

\subjclass[2020]{%
Primary 53A05, % Surfaces in Euclidean and related spaces
Secondary 57R45, 58Kxx % Singularities of differentiable mappings in differential topology, Theory of singularities and catastrophe theory
}
\keywords{%
Monge normal form, surface in 4-space, curvature ellipse, projection
} 

%\date{\today}
%
%\thanks{This work is partially supported by JSPS KAKENHI Grant Numbers 21K03230 and 22K03312}

%%%%%%%%%%%%%%%%%%%%%%%%%%%%%%%%%%%%%%%%%%%%%%%%%%

\allowdisplaybreaks[4]
%\setlength{\abovedisplayskip}{6pt}
%\setlength{\belowdisplayskip}{6pt}
%\renewcommand{\arraystretch}{1.3}
%\renewcommand{\baselinestretch}{1.2}

%%%%%%%%%%%%%%%%%%%%%%%%%%%%%%%%%%%%%%%%%%%%%%%%%%

\maketitle

%%%%%%%%%%%%%%%%%%%%%%%%%%%%%%%%%%%%%%%%%%%%%%%%%%

\begin{abstract}
We study a Monge normal form of a surface in $\R^4$ based on the principal axes of its curvature ellipse.
The curvature-ellipse-based Monge normal form is obtained using only Euclidean motions of the ambient space and local reparameterizations of the surface, and preserves the Euclidean invariants of the curvature ellipse, including its semi-axis lengths up to their ordering.
We also give applications of this normal form to the $2$-jet geometry of
surfaces in $\R^4$ and to the geometry of their orthogonal projections.
\end{abstract}

%%%%%%%%%%%%%%%%%%%%%%%%%%%%%%%%%%%%%%%%%%%%%%%

\section{Introduction}
\label{sec:introduction}

The curvature ellipse has a long history in the differential geometry of surfaces in $\R^4$. 
It was studied in the classical works of Moore and Wilson \cite{MW1916}, Wong \cite{Wong1946}, and Little \cite{Little1969}.
The relation between the curvature ellipse and the normal curvature was studied by Asperti \cite{Asperti1980} and Guadalupe and Rodriguez \cite{GR1983}, also in the more general setting of space forms.

The geometry of surfaces in $\R^4$ has also been studied from the viewpoint of contact and singularity theory.
Mochida, Romero-Fuster and Ruas \cite{MRFR1995} studied contact with hyperplanes using height functions, whereas Romero-Fuster and Sanabria-Codesal \cite{RFSC2013} studied contact with hyperspheres using distance-squared functions.
The geometry of asymptotic directions and inflection points was studied by Garcia, Mochida, Romero-Fuster and Ruas \cite{GMRFR2000} and Bruce and Tari \cite{BT2002} using singularities of height functions.

Orthogonal projections of surfaces in $\R^4$ into 3-space have also been studied from the viewpoint of singularity theory.
Mond \cite{Mond1982} related asymptotic tangent directions to singularities of tangent projections.
This direction was further developed by Bruce and Nogueira \cite{BN1998}, Nu\~no-Ballesteros and Tari \cite{NBT2007}, Oset Sinha and Tari \cite{OST2015}, and Benedini Riul and Oset Sinha \cite{BROS2019}.

Other studies on the geometry of surfaces in $\R^4$ include, for example, those in \cite{Asperti1984, AFR1982, BSB2012, GS2000, GGTG1997, Maccori2004}.

To investigate the geometry of surfaces in $\R^4$, it is important to preserve the Euclidean metric information contained in the second fundamental form. 
We therefore seek a normalization of the second-order Monge normal form which uses only Euclidean motions of the ambient space and local reparameterizations of the surface. 
The principal axes of the curvature ellipse provide such a normalization. 

The significance of Theorem~\ref{thm:Euclidean_Monge_normal_form} for the study of the geometry of the surface, for instance in contact problems with model submanifolds, is that this normalization preserves the Euclidean geometry of the curvature ellipse, in particular, its center and its semi-axis lengths up to their ordering. 
The result most closely related to Theorem~\ref{thm:Euclidean_Monge_normal_form} is that of Bayard and S\'anchez-Bringas~\cite{BSB2012}. 
They introduced four closely related quantities $a$, $b$, $\alpha$, $\beta$, whose squares classify the associated quadratic map up to additional reflection ambiguities. 
Although their result is closely related to the second-order data in Theorem~\ref{thm:Euclidean_Monge_normal_form}, it does not directly provide the Monge parameterization given here.

The essential quantities are therefore the Euclidean invariants of the curvature ellipse, rather than the particular choice or ordering of a principal-axis frame. 
This holds true even at points where the curvature ellipse is degenerate: although an adapted frame may not be unique, the unordered pair of semi-axis lengths $\{\lambda,\mu\}$ is unchanged.

For this reason, we use the above orthogonal normal form rather than normal forms obtained under a general $GL(2,\R)\times GL(2,\R)$ action on pairs of quadratic forms. 
Although such normal forms are useful for affine or flat-geometric classifications, they do not in general preserve the Euclidean curvature ellipse. 
The normal form given by Theorem~\ref{thm:Euclidean_Monge_normal_form} is therefore appropriate for the geometric applications considered here.

The paper is organized as follows: In Section~\ref{sec:Monge_form}, we introduce a curvature-ellipse-based Monge normal form of a surface in $\R^4$. 
In Section~\ref{sec:2-jet_geometry}, we describe the 2-jet geometry of surfaces in $\R^4$ in terms of the coefficients of the curvature-ellipse-based Monge normal form. 
In Section~\ref{sec:projection}, using this normal form, we investigate
the relation between the geometry of surfaces in $\R^4$ and that of their orthogonal projections into the subspaces perpendicular to the projection directions.
 
%%%%%%%%%%%%%%%%%%%%%%%%%%%%%%%%%%%

\section{A curvature-ellipse-based Monge normal form}
\label{sec:Monge_form}

The second-order extrinsic geometry of a regular surface in $\R^4$ is determined by its vector-valued second fundamental form. 
Let $U$ be an open subset of $\R^2$ containing the origin, and let
$$
X\colon U \to \R^4
$$
be a local parameterization of a regular surface $M \subset \R^4$ such that $X(0,0)=\bm{p}\in M$. 
Then, $\{X_u(0,0),X_v(0,0)\}$ is a tangent frame of $T_{\bm{p}}M$. 
We also take an orthonormal frame $\{\bm{\nu}_1,\bm{\nu}_2\}$ of $N_{\bm{p}}M$.
We denote the \textit{first fundamental form at $\bm{p}$} by
$$
I_{\bm{p}}\colon T_{\bm{p}}M \times T_{\bm{p}}M \to \R, 
$$
and $I_{\bm{p}}$ is given by
$$
I_{\bm{p}} = \langle dX, dX\rangle = E du^2+ 2F du dv+G dv^2,
$$
where
$$
E=\langle X_u, X_u\rangle, \quad F=\langle X_u, X_v\rangle, \quad G=\langle X_v, X_v \rangle.
$$
We denote by
$$
II_{\bm{p}}\colon T_{\bm{p}}M \times T_{\bm{p}}M \to N_{\bm{p}}M
$$
the \textit{second fundamental form at $\bm{p}$}.
The second fundamental form $II_{\bm{p}}$ is given by
$$
II_{\bm p} = \langle\bm\nu_1,d^2 X\rangle\bm\nu_1 + \langle\bm\nu_2,d^2 X\rangle\bm\nu_2,
$$
where
$$
\langle\bm\nu_i,d^2 X\rangle = l_i\,du^2+2m_i\,du\,dv+n_i\,dv^2, \qquad i=1,2.
$$
If $X$ is given by a Monge form $X(u,v)=(u,v,f(u,v),g(u,v))$ with $f$ and $g$ having no constant or linear parts, we obtain 
\begin{align*}
l_1 & = f_{uu}(0,0), \quad m_1=f_{uv}(0,0), \quad n_1=f_{vv}(0,0), \\
l_2 & = g_{uu}(0,0),\quad m_2=g_{uv}(0,0), \quad n_2=g_{vv}(0,0).
\end{align*}

The \textit{normal curvature vector} $\kappa_n$ at $\bm{p}$ in a tangent direction $\bm{w} \in T_{\bm{p}}M\setminus\{\bm{0}\}$ is defined by
$$
\kappa_n(\bm{p},\bm{w}) = \dfrac{II_{\bm{p}}(\bm{w},\bm{w})}{I_{\bm{p}}(\bm{w},\bm{w})}.
$$
We denote by
$$
\kappa_n(\bm{p})\colon S(T_{\bm{p}}M)\to N_{\bm{p}}M
$$
its restriction to the unit circle.
The image of the unit circle in $T_{\bm{p}}M$ by $\kappa_n(\bm{p})$, being a quadratic map, is an ellipse, possibly degenerating to a point or a line segment, in the normal plane $N_{\bm{p}}M$, called the \textit{curvature ellipse at $\bm{p}$} and denoted by $\mathcal{E}_{\bm{p}}$. 
Therefore, 
\begin{equation*}
%\label{eq:def_curvature_ellipse}
\mathcal{E}_{\bm{p}}=\{\kappa_n(\bm{p},\bm{w}): \bm{w}\in T_{\bm{p}}M, I_{\bm{p}}(\bm{w},\bm{w})=1\} \subset N_{\bm{p}}M.
\end{equation*}
The center of $\mathcal{E}_{\bm{p}}$ is the \textit{mean curvature vector $\bm{H}$ at $\bm{p}$}, which is given by
$$
\bm{H}=h_1\bm{\nu}_1+h_2\bm{\nu}_2=\dfrac{En_1-2Fm_1+Gl_1}{2(EG-F^2)}\bm{\nu}_1 + \dfrac{En_2-2Fm_2+Gl_2}{2(EG-F^2)}\bm{\nu}_2.
$$
We denote the lengths of the two semi-axes of $\mathcal{E}_{\bm{p}}$ by $\lambda$, $\mu\ge 0$.

Constructions of frames adapted to the curvature ellipse have already
appeared in the literature. 
Guadalupe and Rodriguez \cite{GR1983} described the principal-axis construction of the curvature ellipse and also considered its degenerate cases. 
Asperti, Ferus and Rodriguez \cite{AFR1982} constructed positively oriented orthonormal tangent and normal frames in terms of the trace-free parts of the shape operators. 
Asperti \cite{Asperti1984} expressed this construction in terms of the principal axes of the curvature ellipse in the non-degenerate case (see \cite{GR1983}). 
Maccori \cite{Maccori2004} obtained a closely related normal form for the second fundamental form and related its four coefficients to the center and the semi-axes of the curvature ellipse. 

More directly related to Theorem~\ref{thm:Euclidean_Monge_normal_form}, Bayard and S\'anchez-Bringas~\cite{BSB2012} introduced four quantities $a$, $b$ $,\alpha$, $\beta$, whose squares classify the associated quadratic map up to additional reflection ambiguities.

The following theorem gives the corresponding Monge normal form, including the cases where the curvature ellipse degenerates.

\begin{theorem}
\label{thm:Euclidean_Monge_normal_form}
Let $M$ be a regular surface in $\R^4$, and let $\bm{p}\in M$.
After a Euclidean motion of $\R^4$ and a local reparameterization of $M$, there exists a local Monge normal form at $\bm{p}$ such that
\begin{equation*}
%\label{eq:normal_form}
X(u,v)=\left(u,v,f(u,v),g(u,v)\right),
\end{equation*}
where
\begin{align*}
f(u,v) &= \frac{1}{2}\left\{h_1(u^2+v^2)+\lambda(u^2-v^2) \right\}+\sum_{n\ge3}f_n(u,v),\quad f_n(u,v)=\sum_{i+j=n}\dfrac1{i!j!}a_{ij}u^iv^j, \\
g(u,v) &= \frac{1}{2} \left\{h_2(u^2+v^2)+2\mu uv\right\}+\sum_{n\ge3}g_n(u,v),\quad g_n(u,v)=\sum_{i+j=n}\dfrac1{i!j!}b_{ij}u^iv^j.
\end{align*}
\end{theorem}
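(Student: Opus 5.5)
We start from the standard Monge form and then normalize the quadratic part using only rotations of the tangent plane and of the normal plane. After a translation and a rotation of $\R^4$ we may assume $\bm p=0$ and $T_{\bm p}M=\R^2\times\{0\}$. Locally $M$ is then a graph $X(u,v)=(u,v,f(u,v),g(u,v))$, with $f,g$ having no constant or linear terms. At the origin $E=G=1$ and $F=0$, and the standard basis $\bm\nu_1=e_3$, $\bm\nu_2=e_4$ is orthonormal in $N_{\bm p}M$.

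The curvature ellipse is then the image of $\theta\mapsto II(\cos\theta,\sin\theta)$. Write $Q_i=l_i\cos^2\theta+2m_i\cos\theta\sin\theta+n_i\sin^2\theta$. Using the double-angle formulas, the curve is
\begin{equation*}
\bm H+\bm A\cos2\theta+\bm B\sin2\theta,\qquad \bm H=\tfrac12(l_1+n_1,\,l_2+n_2),\quad \bm A=\tfrac12(l_1-n_1,\,l_2-n_2),\quad \bm B=(m_1,m_2).
\end{equation*}
So the ellipse is the affine image of a circle under the $2\times2$ matrix $L=[\bm A\ \bm B]$.

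Now consider the two rotations. Rotating the tangent coordinates, $(u,v)\mapsto R_\phi(u,v)$, replaces $\theta$ by $\theta+\phi$, which right-multiplies $L$ by the rotation $R_{2\phi}$. Rotating the normal coordinates $(x_3,x_4)$ by $R_\psi$ left-multiplies $L$ and rotates $\bm H$. Both operations are Euclidean motions of $\R^4$ fixing $T_{\bm p}M$ and $N_{\bm p}M$, followed by re-expressing $M$ as a graph over the new tangent coordinates. This re-expression is the local reparameterization, and it keeps the Monge shape because the linear part is just a rotation.

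By the singular value decomposition, $L=P\,\mathrm{diag}(\lambda,\mu)\,Q$ with $P,Q$ orthogonal and $\lambda,\mu\ge0$ the semi-axis lengths. We need $P,Q\in SO(2)$, so we must handle determinants. Any sign defect in $P$ or $Q$ can be removed by changing the sign of the corresponding singular value. If a negative diagonal entry remains, we fix it with a reflection: $x_3\mapsto -x_3$, or $x_4\mapsto -x_4$, or $v\mapsto -v$. The last one maps $\theta\mapsto-\theta$ and so flips the sign of $\bm B$. Each is an orthogonal map of $\R^4$, hence allowed as a Euclidean motion, possibly orientation-reversing. We can also swap the order of $\lambda,\mu$ if desired, which is why the theorem only holds up to ordering.

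After these steps $\bm A=(\lambda,0)$ and $\bm B=(0,\mu)$. This gives
\begin{equation*}
l_1-n_1=2\lambda,\quad l_2-n_2=0,\quad m_1=0,\quad m_2=\mu.
\end{equation*}
Combined with $l_i+n_i=2h_i$, we get $l_1=h_1+\lambda$, $n_1=h_1-\lambda$, and $l_2=n_2=h_2$. This is exactly the stated $2$-jet. The higher-order terms are arbitrary and just named $a_{ij}$, $b_{ij}$.

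The delicate points are degenerate cases, where the SVD frame is not unique:
\begin{itemize}
\item $\lambda=\mu$ (circle),
\item $\mu=0$ (segment),
\item $\lambda=\mu=0$ (point).
\end{itemize}
The SVD still exists in each case, so the argument goes through. We should check that only rotations and reflections were used, so the invariants $\bm H$, $\lambda$, $\mu$ are preserved up to sign conventions absorbed as above.
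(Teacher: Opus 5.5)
Your proof is correct and is essentially the paper's argument. The paper rotates the tangent frame by an explicitly chosen angle $\alpha$ so that the $\cos2\theta$ and $\sin2\theta$ coefficient vectors become orthogonal, and then takes an orthonormal normal frame along them; these two steps are exactly the right and left factors of your singular value decomposition of $L$. Your explicit reflections to make $\lambda,\mu\ge0$ correspond to the paper's freedom to choose the adapted frame $\{\bm\nu_1,\bm\nu_2\}$ with either orientation, realized by an arbitrary orthogonal transformation of $\R^4$.
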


\begin{remark}
The two principal axes of the curvature ellipse may be interchanged.
Therefore, although the respective roles of $\lambda$ and $\mu$ depend on the choice of the adapted frame, the unordered pair $\{\lambda,\mu\}$ is determined by the Euclidean geometry of the curvature ellipse. 
\end{remark}

\begin{proof}[Proof of Theorem~\ref{thm:Euclidean_Monge_normal_form}]
After a translation, we may assume that $\bm{p}=\bm{0}$.
Choose an orthonormal basis $\{\bm{e}_1,\bm{e}_2\}$ of $T_{\bm{p}}M$.

Since $\{\bm{e}_1,\bm{e}_2\}$ is an orthonormal basis of $T_{\bm{p}}M$, we have
$$
\bm{H} = \frac{1}{2}\left(II_{\bm{p}}(\bm{e}_1,\bm{e}_1)+II_{\bm{p}}(\bm{e}_2,\bm{e}_2)\right).
$$
Set
$$
\bm{B} = \frac{1}{2}\left(II_{\bm{p}}(\bm{e}_1,\bm{e}_1)-II_{\bm{p}}(\bm{e}_2,\bm{e}_2)\right), \quad
\bm{C}=II_{\bm{p}}(\bm{e}_1,\bm{e}_2).
$$
For a unit tangent vector $\bm{w}(\theta) = \cos\theta\,\bm{e}_1+\sin\theta\,\bm{e}_2$, the symmetry and bilinearity of $II_{\bm{p}}$ give
$$
II_{\bm{p}}(\bm{w}(\theta),\bm{w}(\theta)) = \cos^2\theta\,II_{\bm{p}}(\bm{e}_1,\bm{e}_1) +2\cos\theta\sin\theta\,II_{\bm{p}}(\bm{e}_1,\bm{e}_2)+\sin^2\theta\,II_{\bm{p}}(\bm{e}_2,\bm{e}_2).
$$
Therefore, by the definitions of $\bm{H}$, $\bm{B}$, and $\bm{C}$,
\begin{equation*}
%\label{eq:curvature-ellipse-parameterization}
II_{\bm{p}}(\bm{w}(\theta),\bm{w}(\theta)) = \bm{H}+\bm{B}\cos2\theta+\bm{C}\sin2\theta.
\end{equation*}
Since every unit tangent vector is of the form $\bm{w}(\theta)$, the
curvature ellipse is parametrized by
\begin{equation}
\label{eq:parameterization_of_curvature_ellipse}
\mathcal{E}_{\bm{p}}(\theta) = \bm{H}+\bm{B}\cos2\theta+\bm{C}\sin2\theta.
\end{equation}

We shall show that the tangent frame can be rotated so that the vectors multiplying $\cos2\theta$ and $\sin2\theta$ in the parameterization of $\mathcal{E}_{\bm{p}}(\theta)$ are orthogonal. 
Rotate the orthonormal tangent frame
through an angle $\alpha$:
$$
\bm{e}_1' = \cos\alpha\,\bm{e}_1+\sin\alpha\,\bm{e}_2, \quad
\bm{e}_2' = -\sin\alpha\,\bm{e}_1+\cos\alpha\,\bm{e}_2.
$$
With respect to the rotated tangent frame, set $\bm{w}(\theta)' = \cos\theta\,\bm{e}_1' + \sin\theta\,\bm{e}_2'$.
Then
\begin{align*}
\bm{w}(\theta)' &=\cos\theta\left(\cos\alpha\,\bm{e}_1+\sin\alpha\,\bm{e}_2\right)+ \sin\theta\left(-\sin\alpha\,\bm{e}_1+\cos\alpha\,\bm{e}_2\right) \\
&=\cos(\theta+\alpha)\,\bm{e}_1+\sin(\theta+\alpha)\,\bm{e}_2 \\
&= \bm{w}(\theta+\alpha).
\end{align*}
Hence, the corresponding parameterization of the curvature ellipse is
\begin{equation*}
\label{eq:new_parameterization_of_curvature_ellipse}
II_{\bm{p}}(\bm{w}(\theta)',\bm{w}(\theta)') = \mathcal{E}_{\bm{p}}(\theta+\alpha).
\end{equation*}
Using \eqref{eq:parameterization_of_curvature_ellipse}, we obtain
\begin{align*}
\mathcal{E}_{\bm{p}}(\theta+\alpha)&=\bm{H}+\bm{B}\cos2(\theta+\alpha)+\bm{C}\sin2(\theta+\alpha) \\
&=\bm{H}+\left(\bm{B}\cos2\alpha+\bm{C}\sin2\alpha\right)\cos2\theta+\left(-\bm{B}\sin2\alpha+\bm{C}\cos2\alpha\right)\sin2\theta.
\end{align*}
Therefore, setting
$$
\bm{B}'=\bm{B}\cos2\alpha+\bm{C}\sin2\alpha, \quad
\bm{C}'=-\bm{B}\sin2\alpha+\bm{C}\cos2\alpha,
$$
we have
$$
\mathcal{E}_{\bm{p}}(\theta+\alpha)=\bm{H}+\bm{B}'\cos2\theta+\bm{C}'\sin2\theta.
$$

A direct computation gives
$$
\langle \bm{B}',\bm{C}'\rangle=\frac{1}{2}\left(\lVert \bm{C}\rVert^2-\lVert \bm{B}\rVert^2\right)\sin4\alpha+\langle\bm{B},\bm{C}\rangle\cos4\alpha.
$$
Hence, an angle $\alpha$ can always be chosen so that $\langle \bm{B}',\bm{C}'\rangle=0$.
Thus $\bm{B}'$ and $\bm{C}'$ are directed along the principal axes of the
curvature ellipse.

Write $\lambda=\lVert \bm{B}'\rVert$, $\mu=\lVert \bm{C}'\rVert$.
As mentioned above, the non-negative numbers $\lambda$ and $\mu$ are the semi-axis lengths of the curvature ellipse. 
If the ellipse is degenerate, one or both of these numbers vanish; the corresponding principal-axis frame is then non-unique. 
Moreover, the two principal axes may be interchanged, so that the semi-axis lengths $\lambda$ and $\mu$ are Euclidean invariants up to their ordering.

Choose an orthonormal basis $\{\bm\nu_1,\bm\nu_2\}$ of $N_pM$ adapted to these principal axes, so that $\bm{B}'=\lambda\bm\nu_1$ and $\bm{C}'=\mu\bm\nu_2$.
When one of the semi-axis lengths vanishes, the corresponding normal direction may be chosen so that $\{\bm\nu_1,\bm\nu_2\}$ is an orthonormal frame.
If both semi-axis lengths vanish, we may choose any orthonormal frame $\{\bm\nu_1,\bm\nu_2\}$ of $N_{\bm{p}}M$.
Thus, in all cases, $\bm{B}'=\lambda\bm\nu_1$ and $\bm{C}'=\mu\bm\nu_2$ with $\lambda,\mu\geq0$.

We drop the primes from the rotated tangent frame and denote $\{\bm{e}_1',\bm{e}_2'\}$ again by $\{\bm{e}_1,\bm{e}_2\}$.
We have
$$
\bm{H}=h_1\bm\nu_1+h_2\bm\nu_2,
$$
and hence
\begin{equation}
\label{eq:curvature_ellipse_normal_form}
II_{\bm{p}}(\bm{w}(\theta),\bm{w}(\theta))=\mathcal{E}_{\bm{p}}(\theta)=\left(h_1+\lambda\cos2\theta\right)\bm\nu_1+\left(h_2+\mu\sin2\theta\right)\bm\nu_2.
\end{equation}
Applying the definitions of $\bm{H}$, $\bm{B}$, and $\bm{C}$ to the rotated tangent frame, with the corresponding vectors denoted by $\bm{B}'$ and $\bm{C}'$, we have
$$
II_{\bm{p}}(\bm{e}_1,\bm{e}_1)=\bm{H}+\bm{B}'\quad II_{\bm{p}}(\bm{e}_1,\bm{e}_2)=\bm{C}',\quad II_{\bm{p}}(\bm{e}_2,\bm{e}_2)=\bm{H}-\bm{B}'.
$$
Since
$$
\bm{H}=h_1\bm\nu_1+h_2\bm\nu_2,\quad \bm{B}'=\lambda\bm\nu_1,\quad \bm{C}'=\mu\bm\nu_2,
$$
we obtain
$$
\begin{aligned}
II_{\bm{p}}(\bm{e}_1,\bm{e}_1) =(h_1+\lambda)\bm\nu_1+h_2\bm\nu_2,\quad II_{\bm{p}}(\bm{e}_1,\bm{e}_2)=\mu\bm\nu_2,\quad II_{\bm{p}}(\bm{e}_2,\bm{e}_2)=(h_1-\lambda)\bm\nu_1+h_2\bm\nu_2.
\end{aligned}
$$

We now construct Monge coordinates adapted to these orthonormal tangent and normal frames. 
By an orthogonal transformation of $\R^4$, followed by the use of the orthogonal projection onto $T_{\bm{p}}M$ as local coordinates, the surface can be written in
a Monge normal form
$$
X(u,v)=\left(u,v,f(u,v),g(u,v)\right),
$$
where $f$ and $g$ have no constant and no linear terms, 
$$
X_u(0,0)=\bm{e}_1, \quad X_v(0,0)=\bm{e}_2,
$$
and the third and fourth coordinate directions are $\bm\nu_1$ and $\bm\nu_2$, respectively.

In these adapted Monge coordinates,
$$
X_{ij}(0,0) = f_{ij}(0,0)\bm\nu_1+g_{ij}(0,0)\bm\nu_2,\quad i,j\in\{u,v\}.
$$
Hence
\begin{equation*}
%\label{eq:Monge_II}
II_{\bm{p}}(\bm{e}_i,\bm{e}_j) = II_{\bm{p}}\left(X_i(0,0),X_j(0,0)\right) = f_{ij}(0,0)\bm\nu_1+g_{ij}(0,0)\bm\nu_2.
\end{equation*}
Comparison with the preceding expressions for the second fundamental form gives
\begin{equation*}
%\label{eq:2-jet}
\begin{gathered}
f_{uu}(0,0)=h_1+\lambda, \quad f_{uv}(0,0)=0, \quad f_{vv}(0,0)=h_1-\lambda, \\
g_{uu}(0,0)=h_2, \quad g_{uv}(0,0)=\mu, \quad g_{vv}(0,0)=h_2.
\end{gathered}
\end{equation*}
Therefore,
\begin{align*}
f(u,v) &= \frac{1}{2}\left\{h_1(u^2+v^2) +\lambda(u^2-v^2) \right\} +O(u,v)^3, \\
g(u,v) &= \frac{1}{2}\left\{h_2(u^2+v^2)+2\mu uv\right\}+O(u,v)^3.
\end{align*}
This proves the theorem.
\end{proof}

%%%%%%%%%%%%%%%%%%%%%%%%%%%%%%%%%

\section{Applications to the 2-jet geometry of surfaces in $\R^4$}
\label{sec:2-jet_geometry}

In this section, we describe the 2-jet geometry of $M$ in terms of the curvature-ellipse-based Monge normal form given by Theorem~\ref{thm:Euclidean_Monge_normal_form}.

The 2-jet geometry of surfaces in $\R^4$ has been well established in, for instance, \cite{BT2002, MRFR1995, NBT2007}, though their primary focus lies elsewhere.
For a systematic account of the differential geometry of surfaces in $\R^4$ from the viewpoint of singularity theory, see \cite[Chapter 7]{IFRT2015}.

Following \cite{MRFR1995}, points $\bm{p}\in M$ are classified according to the position of $\bm{0}_{\bm{p}}$ with respect to $\mathcal{E}_{\bm{p}}$ which is not degenerate, where $\bm{0}_{\bm{p}}$ denotes the origin of $N_{\bm{p}}M$.  
The point $\bm{p}$ is called \textit{elliptic}, \textit{parabolic} or \textit{hyperbolic} if $\bm{0}_{\bm{p}}$ is inside, on or outside $\mathcal{E}_{\bm{p}}$, respectively. 
When $\mathcal{E}_{\bm{p}}$ degenerates to a \textR{radial} line segment, $\bm{p}$ is an \textit{inflection} point of $M$. 
Moreover, an inflection point $\bm{p}$ is of real type, imaginary type, or flat type when $\bm{0}_{\bm{p}} \in \mathcal{E}_{\bm{p}}$, $\bm{0}_{\bm{p}}\not\in\mathcal{E}_{\bm{p}}$, or $\bm0_{\bm{p}}$ is an endpoint of $\mathcal{E}_{\bm{p}}$, respectively.

The curvature ellipse $\mathcal{E}_{\bm{p}}$ is regarded as the image of the unit circle in the tangent plane $T_{\bm{p}}M$ under a pair of quadratic forms $(Q_1,Q_2)$ associated with the second fundamental form $II_{\bm{p}}$.
Writing a tangent vector as $\bm w=u\bm{e}_1+v\bm{e}_2$,
the second fundamental form determines a pair of binary quadratic forms
$$
Q_i(u,v)=l_i u^2+2m_i u v+n_i v^2, \quad i=1,2.
$$
In \cite{BN1998}, the pencil of binary forms determined by $(Q_1,Q_2)$ is used.
A binary quadratic form $A u^2+2B u v+C v^2$ is represented by its coefficients $(A,B,C)\in\R^3$.
The equation $B^2-AC=0$ defines the cone of degenerate binary quadratic forms in $\R^3$.
After projectivization, this cone determines a conic in $\R P^2$.
If $Q_1$ and $Q_2$ are linearly independent, their pencil determines
a projective line in $\R P^2$.
This line meets the conic in 0, 1, or 2 points according as $\delta(\bm{p})$ is negative, zero, or positive, respectively, where
\begin{equation}
\label{eq:delta}
\delta(\bm{p}) = (l_1 n_2-n_1 l_2)^2-4(l_1 m_2 - m_1 l_2)(m_1 n_2 - n_1 m_2)
\end{equation}
If $\delta(\bm{p})$ is negative, zero, or positive, then a point $\bm{p} \in M$ is elliptic, parabolic, or hyperbolic, respectively. 
Besides, if the rank of $\mathcal A(\bm{p})$ is 1, so that $Q_1$ and $Q_2$ are linearly dependent, then $\bm p$ is an inflection point, where 
\begin{equation*}
%\label{eq:matrix_inf}
\mathcal{A}(\bm{p}) = 
\begin{pmatrix}
l_1 & m_1 & n_1 \\
l_2 & m_2 & n_2
\end{pmatrix}.
\end{equation*}
We remark that if $(Q_1,Q_2)$ is equivalent to $(x^2,0)$ or $(0,0)$ under the action of $GL(2,\R) \times GL(2,\R)$, then $\bm{p}$ is said to be a \textit{degenerate inflection} point (see \cite{BT2002, NBT2007}). 
We also remark that if $(Q_1,Q_2)$ is equivalent to $(x^2,0)$, then $\mathcal{E}_{\bm{p}}$ is a \textR{radial} line segment and $\bm{0}_{\bm{p}}$ is an endpoint of $\mathcal{E}_{\bm{p}}$, that is, $\bm{p}$ is an inflection point of flat type. 
On the other hand, if $(Q_1, Q_2)$ is equivalent to $(0,0)$, then $\mathcal{E}_{\bm{p}}$ degenerates to a point $\bm{0}_{\bm{p}}$, and hence $\bm{p}$ is not an inflection point \textR{in the sense of \cite{MRFR1995}. } 

Moreover, by the definition of types of $\bm{p}$, if $\bm{p}$ is an elliptic, parabolic, or hyperbolic point of $M$, then there are 0, 1, or 2 tangent lines to $\mathcal{E}_{\bm{p}}$ from $\bm{0}_{\bm{p}}$, respectively. 
%In \cite{NBT2007}, they introduced that at a hyperbolic point, there are exactly two directions in $N_{\bm{p}}M$, labelled \textit{binormal directions}. 
%The two binormal directions coincide at a parabolic point. 
%A hyperplane orthogonal to a binormal direction is called an \textit{osculating hyperplane}. 

Let $\bm{w}(\theta)\in S(T_{\bm{p}}M)$, $\theta\in[0,2\pi)$, be a regular parameterization of the unit circle in $T_{\bm{p}}M$.
%The torsion $\tau=\tau(\bm{p},\theta)$ of $\kappa_n(\bm{p},\bm{w}(\theta))$ is called the \textit{normal torsion} of $M$ in the direction $\bm{w}(\theta)$ at $\bm{p}$. 
A direction $\bm{w}(\theta_0) \in T_{\bm{p}}M$ along which 
$$
\kappa_n(\bm{p},\bm{w}(\theta_0))\quad \text{and} \quad \dfrac{\partial}{\partial \theta}\kappa_n(\bm{p},\bm{w}(\theta_0))
$$
are parallel is an \textit{asymptotic direction} of $M$ at $\bm{p}$, where $\bm{w}(\theta)$ and $-\bm{w}(\theta)$ represent the same direction (see \cite{GMRFR2000, MRFR1995}).  
Equivalently, $\bm{w}(\theta_0)$ is an asymptotic direction if the tangent line to $\mathcal{E}_{\bm{p}}$ at $\kappa_n(\bm{p},\bm{w}(\theta_0))$ passes through $\bm{0}_{\bm{p}}$ (see \cite{NBT2007}).
Therefore, if $\bm{p}$ is neither an inflection point nor a degenerate inflection point, then there are 0, 1 or 2 asymptotic directions at $\bm{p}$ according as $\bm{p}$ is an elliptic, parabolic or hyperbolic point of $M$, respectively. 
If $\bm{p}$ is an inflection point, then every direction in $T_{\bm{p}}M$ is asymptotic (see \cite{MRFR1995}). 
In \cite{BT2002}, they introduced the binary differential equation 
\begin{equation}
\label{eq:BDE_of_asymptotic}
(l_1 m_2-m_1l_2)du^2+(l_1 n_2-n_1l_2)du dv+(m_1n_2-n_1m_2)dv^2=0,
\end{equation}
and showed that the asymptotic directions of $M$ at $\bm{p}$ are solutions of \eqref{eq:BDE_of_asymptotic}. 
The discriminant of \eqref{eq:BDE_of_asymptotic} coincides with $\delta(\bm{p})$. 

For a unit normal vector $\bm\nu=\cos\phi \bm{\nu}_1+\sin\phi\bm{\nu}_2 \in S(N_{\bm{p}}M)$ ($\phi\in(-\pi/2,\pi/2]$), we define a symmetric bilinear form 
$$
II_{\bm{p},\bm\nu} \colon T_{\bm{p}}M \times T_{\bm{p}}M \to \R \quad\text{by}\quad II_{\bm{p},\bm\nu}(\bm{w}_1,\bm{w}_2)=\langle II_{\bm{p}}(\bm{w}_1,\bm{w}_2), \bm\nu \rangle.\quad (\bm{w}_i \in T_{\bm{p}}M)
$$
Hence, we obtain, in terms of the tangent frame $\{X_u,X_v\}$ of $T_{\bm{p}}M$, 
\begin{equation}
\label{eq:II}
\begin{split}
\left[II_{\bm{p},\bm\nu}\right] &= 
\begin{pmatrix}
II_{\bm{p},\bm\nu}(X_u,X_u) & II_{\bm{p},\bm\nu}(X_u,X_v) \\
II_{\bm{p},\bm\nu}(X_u,X_v) & II_{\bm{p},\bm\nu}(X_v,X_v)
\end{pmatrix} \\
&=
\begin{pmatrix}
l_1\cos\phi+l_2\sin\phi & m_1\cos\phi+m_2\sin\phi \\
m_1\cos\phi+m_2\sin\phi & n_1\cos\phi+n_2\sin\phi
\end{pmatrix}.
\end{split}
\end{equation}
A straightforward calculation shows that 
\begin{equation}
\label{eq:det_II}
\text{det}[II_{\bm{p},\bm\nu}] = (l_1 n_1-m_1^2)\cos^2\phi+(l_1n_2-2m_1m_2+n_1l_2)\cos\phi\sin\phi+(l_2n_2-m_2^2)\sin^2\phi.
\end{equation}
Moreover, regarding $\det[II_{\bm p,\bm\nu}]$ as a homogeneous quadratic form in $(\cos\phi,\sin\phi)$, we see that its discriminant is
\begin{equation}
\label{eq:disc_det_II}
\text{disc}(\text{det}[II_{\bm{p},\bm\nu}])= \delta(\bm{p}).
\end{equation}

We set the \textit{shape operator $S_{\bm{p},\bm\nu}\colon T_{\bm{p}}M\to T_{\bm{p}}M$ in the normal direction $\bm\nu$} by
$$
I_{\bm{p}}(S_{\bm{p},\bm\nu}(\bm{w}_1),\bm{w}_2)
=
II_{\bm{p},\bm\nu}(\bm{w}_1,\bm{w}_2)
$$
It is self-adjoint with respect to $I_{\bm p}$ (see \cite{Tari2009}).
This is the analogue, in the normal direction $\bm\nu$, of the usual shape operator of a surface in $\R^3$.
We denote the eigenvalues of $[S_{\bm p,\bm\nu}]$ by
$\kappa_1(\bm{p},\bm\nu)$ and $\kappa_2(\bm{p},\bm\nu)$.
Then 
\begin{equation}
\label{eq:det_S}
\det [S_{\bm{p},\bm\nu}]
=
\frac{\det[II_{\bm{p},\bm\nu}]}{\det[I_{\bm{p}}]}
=
\kappa_1(\bm{p},\bm\nu)\kappa_2(\bm{p},\bm\nu).
\end{equation}

The condition that $\text{det}[II_{\bm{p},\bm\nu}]=0$ is equivalent to the condition that there exists a non-zero vector $\bm{w}_0 \in T_{\bm{p}}M$ such that $II_{\bm{p},\bm\nu}(\bm{w}_0,\bm{w})=0$ for every $\bm{w} \in T_{\bm{p}}M$. 
Therefore, regarding $\bm{w}_0$ as the kernel direction of $[II_{\bm{p},\bm\nu}]$, we obtain $\langle II_{\bm{p}}(\bm{w}_0,\bm{w}_0),\bm\nu \rangle=0$, and thus $\langle II_{\bm{p}}(\bm{w}_0,\bm{w}_0^{\perp}),\bm\nu \rangle=0$. 
We take $\bm{w}(\theta)\in S(T_{\bm{p}}M)$, $\theta\in[0,2\pi)$, to be a regular parameterization of the unit circle in $T_{\bm{p}}M$. 
We obtain 
\begin{equation}
\label{eq:kappa_and_kappa_theta}
\kappa_n(\bm{p}, \bm{w}(\theta))=II_{\bm{p}}(\bm{w}(\theta),\bm{w}(\theta))\quad \text{and} \quad \dfrac{\partial}{\partial \theta} \kappa_n(\bm{p},\bm{w}(\theta))=2a(\theta)II_{\bm{p}}(\bm{w}(\theta),\bm{w}(\theta)^{\perp}),
\end{equation}
where 
$$
a(\theta)\ne0\quad \text{with} \quad \frac{\partial}{\partial \theta}\bm{w}(\theta) = a(\theta)\bm{w}(\theta)^{\perp}. 
$$
The equations \eqref{eq:kappa_and_kappa_theta}, for the kernel direction $\bm{w}_0=\bm{w}(\theta_0)$, imply that 
\begin{equation}
\label{eq:II_p_nu}
\begin{split}
II_{\bm{p},\bm\nu}(\bm{w}_0,\bm{w}_0) &= \langle\kappa_n(\bm{p},\bm{w}(\theta_0)),\bm\nu \rangle=0, \\
II_{\bm{p},\bm\nu}(\bm{w}_0,\bm{w}_0^{\perp}) & = \langle II_{\bm{p}}(\bm{w}_0,\bm{w}_0^{\perp}),\bm\nu\rangle = \dfrac1{2a(\theta_0)}\left\langle \dfrac{\partial}{\partial \theta}\kappa_n(\bm{p},\bm{w}(\theta_0)),\bm\nu \right\rangle =0.
\end{split}
\end{equation}
It follows from \eqref{eq:II_p_nu} that both
$$
\kappa_n(\bm{p},\bm{w}(\theta_0)) \quad\text{and}\quad \frac{\partial}{\partial\theta}\kappa_n(\bm{p},\bm{w}(\theta_0))
$$
belong to the one-dimensional subspace $\bm\nu^\perp$ of $N_{\bm{p}}M$.
Hence, they are parallel, and therefore the kernel direction $\bm{w}_0$ of $[II_{\bm{p},\bm\nu}]$ is an asymptotic direction at $\bm{p}$. 
%Moreover, $\bm\nu^\perp$ is the direction of the tangent line to $\mathcal{E}_{\bm{p}}$ at $\kappa_n(\bm{p},\bm{w}(\theta_0))$ passing through $\bm{0}_{\bm{p}}$. 
\textR{Note that at an inflection point $\bm{p}$ or at a point $\bm{p}$ where the curvature ellipse degenerates to a point other than $\bm{0}_{\bm{p}}$, there exists a unit normal vector $\bm\nu_0$ such that $[II_{\bm{p},\bm\nu_0}]=\bm{0}$, whereas at a point $\bm{p}$ where the curvature ellipse degenerates to $\bm{0}_{\bm{p}}$, $[II_{\bm{p},\bm\nu}]=\bm{0}$ holds for every unit normal vector $\bm\nu$. 
Hence, every tangent direction at $\bm{p}$ is asymptotic in all these cases. }

Conversely, let $\bm{w}_0=\bm{w}(\theta_0)$ be an asymptotic direction at $\bm{p}$. 
Then the tangent line $\ell$ to $\mathcal{E}_{\bm{p}}$ at $\kappa_n(\bm{p},\bm{w}(\theta_0))$ passes through $\bm{0}_{\bm{p}}$. 
Let $\bm\nu\in N_{\bm{p}}M$ be a unit normal vector to $\ell$. 
Since $\ell$ passes through $\bm{0}_{\bm{p}}$, we may regard $\ell$ as the one-dimensional subspace $\bm\nu^\perp$ of $N_{\bm{p}}M$.
By \eqref{eq:kappa_and_kappa_theta},
$$
II_{\bm{p}}(\bm{w}_0,\bm{w}_0) =\kappa_n(\bm{p},\bm{w}(\theta_0))\in\bm\nu^\perp \quad \text{and}\quad 
II_{\bm{p}}(\bm{w}_0,\bm{w}_0^\perp) =\frac{1}{2a(\theta_0)} \frac{\partial}{\partial\theta} \kappa_n(\bm{p},\bm{w}(\theta_0)) \in\bm\nu^\perp.
$$
Therefore, 
$$
\langle II_{\bm{p}}(\bm{w}_0,\bm{w}_0),\bm\nu\rangle = II_{\bm{p},\bm\nu}(\bm{w}_0,\bm{w}_0) = 0\quad \text{and} \quad \langle II_{\bm{p}}(\bm{w}_0,\bm{w}_0^{\perp}),\bm\nu \rangle = II_{\bm{p},\bm\nu}(\bm{w}_0,\bm{w}_0^\perp) =0.
$$
Since $\{\bm{w}_0,\bm{w}_0^\perp\}$ is an orthonormal basis of
$T_{\bm{p}}M$, it follows that $II_{\bm{p},\bm\nu}(\bm{w}_0,\bm{w})=0$ for every $\bm{w}\in T_{\bm{p}}M$.
Hence $\bm{w}_0$ is the kernel direction of $[II_{\bm{p},\bm\nu}]$.
Such a normal direction $\bm\nu$, regarded up to sign, is called a
\textit{binormal direction} of $M$ at $\bm{p}$.
At a hyperbolic point, there are exactly two binormal directions, whereas at a parabolic point two binormal directions coincide (see  \cite{MRFR1995,NBT2007}). 
\textR{Equivalently, a normal direction $\bm\nu$ is a binormal direction if and only if the height function in the direction $\bm\nu$ has a degenerate critical point at $\bm{p}$. 
This characterization also applies at inflection points  (see \cite{MRFR1995}) and at points where the curvature ellipse degenerates to a point, even though a unique asymptotic direction may not be determined at such points.}
It follows that the binormal directions of $M$ at $\bm{p}$, like the asymptotic directions, are the solutions $\text{det}[II_{\bm{p},\bm\nu}]=0$ in the normal direction $[\cos\phi:\sin\phi]$. 

After a Euclidean motion, we may assume that $\bm{p}=\bm{0}$ and write $M$ locally in the Monge normal form given by Theorem~\ref{thm:Euclidean_Monge_normal_form}:
$$
X:(\R^2,\bm{0})\to(\R^4,\bm{0}),\quad X(u,v)=(u,v,f(u,v),g(u,v)).
$$
Then we have an adapted orthonormal frame $\{\bm{e}_1,\bm{e}_2,\bm\nu_1,\bm\nu_2\}$ of $\R^4$, where $\{\bm{e}_1,\bm{e}_2\}$ is an orthonormal frame of $T_{\bm{p}}M$ and $\{\bm\nu_1,\bm\nu_2\}$ is an orthonormal frame of $N_{\bm{p}}M$. 
In particular, 
$$
X_u(0,0)=\bm{e}_1,
\qquad
X_v(0,0)=\bm{e}_2,
$$
and thus $\text{det}[I_{\bm{p}}]=E(0,0) G(0,0) - F(0,0)^2 = 1$. 
Moreover, 
\begin{align*}
II_p(\bm{e}_1,\bm{e}_1) &=(h_1+\lambda)\bm\nu_1+h_2\bm\nu_2, \\
II_p(\bm{e}_1,\bm{e}_2) &=\mu\bm\nu_2, \\
II_p(\bm{e}_2,\bm{e}_2) &=(h_1-\lambda)\bm\nu_1+h_2\bm\nu_2. 
\end{align*}

\begin{proposition}
\label{prop:second}
We have
\begin{equation}
\label{eq:second}
II_{\bm{p}}=((h_1+\lambda)du^2+(h_1-\lambda)dv^2)\bm\nu_1+(h_2du^2+2\mu du dv+h_2 dv^2)\bm\nu_2,
\end{equation}
\begin{equation}
\label{eq:matrix_of_II}
[II_{\bm{p},\bm\nu}]=
\begin{pmatrix}
(h_1+\lambda)\cos\phi+h_2\sin\phi & \mu\sin\phi \\
\mu\sin\phi & (h_1-\lambda)\cos\phi+h_2\sin\phi
\end{pmatrix},
\end{equation}
\begin{equation}
\label{eq:det_second}
\mathrm{det}[II_{\bm{p},\bm\nu}]=(h_1^2-\lambda^2)\cos^2\phi+2h_1 h_2 \cos\phi\sin\phi+(h_2^2-\mu^2)\sin^2\phi = \mathrm{det} [S_{\bm{p},\bm\nu}],
\end{equation}
and 
\begin{equation}
\label{eq:delta}
\mathrm{disc}(\mathrm{det}[II_{\bm{p},\bm\nu}]) = 4(h_1^2\mu^2+h_2^2\lambda^2-\lambda^2\mu^2) = \delta(\bm{p}).
\end{equation}
\end{proposition}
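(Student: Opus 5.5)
The plan is to substitute the second-order coefficients of the curvature-ellipse-based Monge normal form of Theorem~\ref{thm:Euclidean_Monge_normal_form} into the general formulas already written down in this section. From the proof of Theorem~\ref{thm:Euclidean_Monge_normal_form} we read off
\[
l_1=h_1+\lambda,\quad m_1=0,\quad n_1=h_1-\lambda,\qquad l_2=h_2,\quad m_2=\mu,\quad n_2=h_2 .
\]
Since $\{\bm\nu_1,\bm\nu_2\}$ is orthonormal and $X_u(0,0)=\bm e_1$, $X_v(0,0)=\bm e_2$, the expression $II_{\bm p}=\langle\bm\nu_1,d^2X\rangle\bm\nu_1+\langle\bm\nu_2,d^2X\rangle\bm\nu_2$ immediately yields \eqref{eq:second}. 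Inserting the same values into \eqref{eq:II} gives the matrix \eqref{eq:matrix_of_II}.

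For \eqref{eq:det_second}, I will either compute the determinant of \eqref{eq:matrix_of_II} directly or substitute into \eqref{eq:det_II}. The coefficients are
\[
l_1n_1-m_1^2=h_1^2-\lambda^2,\qquad l_1n_2-2m_1m_2+n_1l_2=(h_1+\lambda)h_2+(h_1-\lambda)h_2=2h_1h_2,\qquad l_2n_2-m_2^2=h_2^2-\mu^2 .
\]
The equality with $\det[S_{\bm p,\bm\nu}]$ then follows from \eqref{eq:det_S}, because $\det[I_{\bm p}]=1$ in the adapted frame.

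For \eqref{eq:delta}, regard \eqref{eq:det_second} as a binary quadratic form $A\cos^2\phi+2B\cos\phi\sin\phi+C\sin^2\phi$ with $A=h_1^2-\lambda^2$, $B=h_1h_2$, $C=h_2^2-\mu^2$. Its discriminant is $4(B^2-AC)$, which is exactly the normalization under which \eqref{eq:disc_det_II} holds. Expanding gives
\[
4\bigl(h_1^2h_2^2-(h_1^2-\lambda^2)(h_2^2-\mu^2)\bigr)=4(h_1^2\mu^2+h_2^2\lambda^2-\lambda^2\mu^2).
\]
As a consistency check, I will also compute $\delta(\bm p)$ directly from its definition \eqref{eq:delta} in terms of $l_i,m_i,n_i$. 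We have $l_1n_2-n_1l_2=2\lambda h_2$, $l_1m_2-m_1l_2=(h_1+\lambda)\mu$ and $m_1n_2-n_1m_2=-(h_1-\lambda)\mu$. Hence
\[
\delta(\bm p)=4\lambda^2h_2^2+4(h_1^2-\lambda^2)\mu^2,
\]
which agrees with the expression above. This direct check also confirms \eqref{eq:disc_det_II} in this frame independently.

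The only potential obstacle is bookkeeping: the discriminant has to be taken with the factor-$2$ convention for the middle coefficient, so that it matches $\delta(\bm p)$. Checking $\delta(\bm p)$ directly from the $l_i,m_i,n_i$ removes any ambiguity on that point. Everything else is routine substitution.
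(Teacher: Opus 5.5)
Your proposal is correct and follows the same route as the paper. You read off $l_1=h_1+\lambda$, $m_1=0$, $n_1=h_1-\lambda$, $l_2=h_2$, $m_2=\mu$, $n_2=h_2$ from the normal form and substitute them into \eqref{eq:II}, \eqref{eq:det_II}, \eqref{eq:det_S} (using $\det[I_{\bm p}]=1$) and \eqref{eq:disc_det_II}; your extra direct evaluation of $\delta(\bm p)$ from $l_i,m_i,n_i$ is a valid consistency check that the paper omits, and all your arithmetic checks out.
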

\begin{proof}
The first equation \eqref{eq:second} immediately follows from the definition of $II_{\bm{p}}$, the second equation \eqref{eq:matrix_of_II} immediately follows from \eqref{eq:II}, the third equation \eqref{eq:det_second} follows from \eqref{eq:det_II} and \eqref{eq:det_S} and the fourth equation \eqref{eq:delta} follows from \eqref{eq:disc_det_II} and \eqref{eq:det_second}.
\end{proof}

Since the type of $\bm{p}$ is determined by the sign of $\delta(\bm{p})$, we obtain the following corollary:
\begin{corollary}
Suppose that $\bm{p}$ is neither an inflection point nor a degenerate inflection point of $M$.
A point $\bm{p}$ is an elliptic, parabolic, or hyperbolic point of $M$ if and only if 
$$
h_1^2\mu^2+h_2^2\lambda^2-\lambda^2\mu^2
$$
is negative, zero, or positive, respectively.
\end{corollary}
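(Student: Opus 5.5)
The corollary should follow directly from Proposition~\ref{prop:second}, combined with the classification of points by the sign of $\delta(\bm{p})$ recalled earlier in this section. Under the hypothesis that $\bm{p}$ is neither an inflection point nor a degenerate inflection point, the quadratic forms $Q_1$ and $Q_2$ are linearly independent, so the rank of $\mathcal{A}(\bm{p})$ is $2$. Hence the pencil of $(Q_1,Q_2)$ defines a genuine projective line in $\R P^2$. As stated before \eqref{eq:delta}, $\bm{p}$ is then elliptic, parabolic or hyperbolic according as $\delta(\bm{p})$ is negative, zero or positive.

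First I will place $M$ in the curvature-ellipse-based Monge normal form of Theorem~\ref{thm:Euclidean_Monge_normal_form} at $\bm{p}$. This uses only a Euclidean motion and a reparameterization. The type of $\bm{p}$ is defined by the position of $\bm{0}_{\bm{p}}$ relative to $\mathcal{E}_{\bm{p}}$, which is a Euclidean-invariant notion, so it is unchanged by this normalization. I also need the sign of $\delta(\bm{p})$ to be independent of the chosen frames. The cleanest route is to note that in the normal form $\delta(\bm{p})$ equals the discriminant of $\det[II_{\bm{p},\bm\nu}]$ by \eqref{eq:disc_det_II}. That quadratic form in $(\cos\phi,\sin\phi)$ is intrinsically attached to $II_{\bm{p}}$, since $\det[I_{\bm{p}}]=1$. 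Under an orthonormal change of normal frame its discriminant is invariant, and under a change of tangent frame it is multiplied by a positive factor, so its sign is well defined.

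Then I will invoke the last identity of Proposition~\ref{prop:second}: in the normal form
\[
\delta(\bm{p}) = 4\left(h_1^2\mu^2+h_2^2\lambda^2-\lambda^2\mu^2\right).
\]
Since $4>0$, the sign of $\delta(\bm{p})$ coincides with the sign of $h_1^2\mu^2+h_2^2\lambda^2-\lambda^2\mu^2$. Combining this with the elliptic/parabolic/hyperbolic criterion gives the stated equivalences.

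The only point needing care is the justification that the $\delta$-criterion applies, namely that the excluded cases are exactly those where $\mathcal{A}(\bm{p})$ has rank $\le 1$. I would briefly check that in the normal form, rank $2$ fails precisely when the rows $(h_1+\lambda,0,h_1-\lambda)$ and $(h_2,\mu,h_2)$ are dependent. This happens, for example, when $\mu=0$ and $h_2\lambda=0$, which matches the inflection and degenerate-inflection cases being excluded by hypothesis.
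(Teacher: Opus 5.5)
Your proposal is correct and follows the paper's route: the paper obtains the corollary directly from the identity $\delta(\bm{p})=4(h_1^2\mu^2+h_2^2\lambda^2-\lambda^2\mu^2)$ in Proposition~\ref{prop:second}, combined with the criterion that the type of $\bm{p}$ is given by the sign of $\delta(\bm{p})$. Your extra checks are sound and are left implicit in the paper: the frame-independence of the sign of $\delta(\bm{p})$, and the rank of $\mathcal{A}(\bm{p})$, which drops exactly when $(h_1+\lambda)\mu=h_2\lambda=(h_1-\lambda)\mu=0$, i.e.\ when $\mu=h_2\lambda=0$ or $h_1=\lambda=0$.
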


By \eqref{eq:BDE_of_asymptotic}, the following corollary holds:
\begin{corollary}
\label{cor:BDE_of_asymptotic}
The asymptotic directions of $M$ at $\bm{p}$ are solutions of the binary differential equation
\begin{equation*}
(h_1+\lambda)\mu du^2+2h_2\lambda du dv+(-h_1+\lambda)\mu dv^2=0.
\end{equation*} 
\end{corollary}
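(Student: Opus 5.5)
The plan is to substitute the curvature-ellipse-based coefficients of Theorem~\ref{thm:Euclidean_Monge_normal_form} into the binary differential equation \eqref{eq:BDE_of_asymptotic} of Bruce and Tari. That equation characterizes the asymptotic directions at $\bm{p}$ in terms of the coefficients $l_i,m_i,n_i$ of $II_{\bm{p}}$ with respect to the tangent frame $\{X_u,X_v\}$ and an orthonormal normal frame $\{\bm\nu_1,\bm\nu_2\}$.

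First, I would fix the Monge normal form given by Theorem~\ref{thm:Euclidean_Monge_normal_form}. In these coordinates $X_u(0,0)=\bm{e}_1$ and $X_v(0,0)=\bm{e}_2$ are orthonormal, and the third and fourth coordinate directions are $\bm\nu_1,\bm\nu_2$. So the coefficients in \eqref{eq:BDE_of_asymptotic} are the second derivatives of $f$ and $g$ at the origin. By Proposition~\ref{prop:second}, equation \eqref{eq:second}, or directly from the $2$-jets in the proof of the theorem, these are
$$
l_1=h_1+\lambda,\quad m_1=0,\quad n_1=h_1-\lambda,\quad l_2=h_2,\quad m_2=\mu,\quad n_2=h_2.
$$

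Next, I would compute the three coefficients of \eqref{eq:BDE_of_asymptotic}:
$$
l_1m_2-m_1l_2=(h_1+\lambda)\mu,\qquad
l_1n_2-n_1l_2=(h_1+\lambda)h_2-(h_1-\lambda)h_2=2h_2\lambda,
$$
$$
m_1n_2-n_1m_2=-(h_1-\lambda)\mu=(-h_1+\lambda)\mu.
$$
Inserting these into \eqref{eq:BDE_of_asymptotic} gives the stated equation. As a consistency check, the discriminant $(2h_2\lambda)^2-4(\lambda^2-h_1^2)\mu^2$ equals $4(h_1^2\mu^2+h_2^2\lambda^2-\lambda^2\mu^2)$, which agrees with \eqref{eq:delta} in Proposition~\ref{prop:second}.

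There is no real obstacle. The only point needing care is that \eqref{eq:BDE_of_asymptotic} is taken with respect to the parameterization's coordinate frame. Since the Monge form here has an orthonormal frame at $\bm{p}$, the coefficients are exactly the Monge second derivatives, and $du,dv$ are the components of a tangent direction in $\{\bm{e}_1,\bm{e}_2\}$.

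I would also remark on the degenerate cases. When all three coefficients vanish, for instance when $\mu=0$ and $h_2\lambda=0$, every direction solves the equation. This is consistent with the fact, noted above, that at inflection points and at points where $\mathcal{E}_{\bm{p}}$ degenerates to a point, every tangent direction is asymptotic.
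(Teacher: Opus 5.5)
Your proposal is correct and follows the paper's own argument: substitute the normal-form coefficients $l_1=h_1+\lambda$, $m_1=0$, $n_1=h_1-\lambda$, $l_2=h_2$, $m_2=\mu$, $n_2=h_2$ into the Bruce--Tari equation \eqref{eq:BDE_of_asymptotic}. Your discriminant check against Proposition~\ref{prop:second} and your remark on the degenerate (inflection) cases are accurate additions that go slightly beyond what the paper writes out.
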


By \eqref{eq:det_second}, the following corollary holds:
\begin{corollary}
\label{cor:BDE_of_binormal}
The binormal directions $\bm\nu=s\bm\nu_1+t\bm\nu_2$ of $M$ at $\bm{p}$ are the solutions of the homogeneous quadratic equation 
$$
(h_1^2-\lambda^2)s^2+2h_1 h_2 s t+(h_2^2-\mu^2)t^2=0.
$$
\end{corollary}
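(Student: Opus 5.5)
The statement is Corollary~\ref{cor:BDE_of_binormal}. The plan is to combine the characterization of binormal directions obtained above with the explicit determinant in Proposition~\ref{prop:second}.

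First, recall from the discussion preceding Proposition~\ref{prop:second} that a unit normal $\bm\nu=\cos\phi\,\bm\nu_1+\sin\phi\,\bm\nu_2$ is binormal if and only if $[II_{\bm p,\bm\nu}]$ is singular, that is, $\det[II_{\bm p,\bm\nu}]=0$. This characterization was derived via the kernel direction argument in both directions, and it remains valid at inflection points and at points where the ellipse degenerates. Since the Monge normal form of Theorem~\ref{thm:Euclidean_Monge_normal_form} gives $\det[I_{\bm p}]=1$, this condition is the same as $\det[S_{\bm p,\bm\nu}]=0$.

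Next, I would substitute \eqref{eq:det_second}. This yields
\[
(h_1^2-\lambda^2)\cos^2\phi+2h_1h_2\cos\phi\sin\phi+(h_2^2-\mu^2)\sin^2\phi=0.
\]
The left side is a homogeneous quadratic form in $(\cos\phi,\sin\phi)$. Writing $\bm\nu=s\bm\nu_1+t\bm\nu_2$ for an arbitrary nonzero normal vector, we have $(s,t)=r(\cos\phi,\sin\phi)$ with $r\neq0$. Homogeneity of degree two then shows that the vanishing of the form at $(\cos\phi,\sin\phi)$ is equivalent to its vanishing at $(s,t)$. Since binormal directions are only defined up to sign and scale, this is exactly the claimed equation in $[s:t]$.

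The only point needing care is the degenerate situation where the quadratic form vanishes identically, for example $h_1=h_2=\lambda=\mu=0$. In that case every normal direction solves the equation, and this is consistent with the remark that $[II_{\bm p,\bm\nu}]=\bm 0$ for all $\bm\nu$. I expect no real obstacle beyond this bookkeeping.
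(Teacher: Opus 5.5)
Your proposal is correct and follows the paper's own route. You use the characterization that binormal directions are exactly the zeros of $\det[II_{\bm p,\bm\nu}]$ in $[\cos\phi:\sin\phi]$, substitute \eqref{eq:det_second}, and pass to $(s,t)$ by homogeneity. Your remark on the identically vanishing case is consistent with the paper and is simply an extra detail it does not spell out.
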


%\begin{proposition}
%\label{prop:inflection}
%A point $\bm{p}$ is an inflection point of $M$ if and only if one of the following hold:
%\begin{enumerate}
%\item 
%$\mathcal{E}_{\bm{p}}$ degenerates to a point;
%
%\item
%$\mathcal{E}_{\bm{p}}$ degenerates to a line segment and the line containing this segment passes through $\bm{0}_{\bm{p}}$.
%\end{enumerate}
%
%\end{proposition}
%\begin{proof}
%It immediately follows form the definition of $\mathcal{A}(\bm{p})$ that we have 
%\begin{equation}
%\label{eq:mathcal_A}
%\mathcal{A}(\bm{p}) = 
%\begin{pmatrix}
%h_1+\lambda & 0 & h_1-\lambda \\
%h_2 & \mu & h_2
%\end{pmatrix}.
%\end{equation}
%Hence, $\bm{p}$ is an inflection point, which is equivalent to this matrix is of rank 1 if and only if  
%$$
%(h_1+\lambda)\mu = h_2\lambda=(-h_1+\lambda)\mu=0\quad \text{and}\quad (h_1, h_2, \lambda, \mu)\ne(0,0,0,0).
%$$
%This condition is equivalent to one of the following hold:
%\begin{itemize}
%\item[(a)]
%$h_1 \ne0$, $h_2=0$, $\lambda \ne 0$, and $\mu=0$;
%\item[(b)]
%$h_1 \ne0$, $h_2=\lambda =\mu=0$; 
%\item[(c)]
%$h_1=0$, $h_2\ne0$, $\lambda=0$, and $\mu\ne0$;
%\item[(d)]
%$h_1=0$, $h_2\ne0$, $\lambda=\mu=0$.
%\end{itemize}
%The condition (b) and (d) are equivalent to (1), whereas (a) and (c) are equivalent to (2).
%\end{proof}

As mentioned above, if $\bm{p}$ is neither an inflection point nor a degenerate inflection point, then the number of asymptotic directions and binormal directions at $\bm{p}$ are determined by the type of $\bm{p}$. 
Hence, we have the following corollary:
\begin{corollary}
Suppose that $\bm{p}$ is neither an inflection point nor a degenerate inflection point of $M$.
Then the number of distinct asymptotic directions, and equivalently the number of distinct binormal directions, at $\bm{p}$ is $0$, $1$, or $2$ according as
$$
h_1^2\mu^2+h_2^2\lambda^2-\lambda^2\mu^2
$$
is negative, zero, or positive, respectively.
\end{corollary}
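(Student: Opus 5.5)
The plan is to reduce the count of asymptotic and binormal directions to the sign of the discriminant in Proposition~\ref{prop:second}.

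\emph{Step 1: binormal directions.} By Corollary~\ref{cor:BDE_of_binormal}, the binormal directions $[s:t]$ are exactly the projective roots of the binary quadratic
\[
q(s,t)=(h_1^2-\lambda^2)s^2+2h_1h_2\,st+(h_2^2-\mu^2)t^2 .
\]
If $q$ is not identically zero, it has $0$, $1$ or $2$ distinct real projective roots according as its discriminant is negative, zero or positive. By \eqref{eq:delta} this discriminant equals $4(h_1^2\mu^2+h_2^2\lambda^2-\lambda^2\mu^2)$.

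\emph{Step 2: asymptotic directions.} Use the correspondence established before Proposition~\ref{prop:second}. A binormal $\bm\nu$ determines the kernel direction of $[II_{\bm p,\bm\nu}]$, which is asymptotic. Conversely, an asymptotic $\bm w_0$ determines $\bm\nu$ as the normal to the tangent line of $\mathcal E_{\bm p}$ through $\bm 0_{\bm p}$, with $\bm w_0$ in the kernel. I will check that this gives a bijection between the two sets, so the two counts agree.

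\emph{Step 3: injectivity (main obstacle).} This is where the hypothesis is needed.
\begin{itemize}
\item[(a)] Each binormal $\bm\nu$ must have a one-dimensional kernel. If $[II_{\bm p,\bm\nu}]=0$, then the map $\bm w\mapsto II_{\bm p}(\bm w,\bm w)$ lands in $\bm\nu^\perp$. Then $Q_1,Q_2$ are linearly dependent, so $\mathcal A(\bm p)$ has rank $\le1$, and $\bm p$ is an inflection point or a degenerate inflection point, contrary to assumption.
\item[(b)] Distinct binormals must give distinct kernel directions. If $\bm\nu\neq\pm\bm\nu'$ share a kernel vector $\bm w_0$, then $II_{\bm p}(\bm w_0,\cdot)\equiv0$, since $\bm\nu,\bm\nu'$ span $N_{\bm p}M$. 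In the normal form this gives a common linear factor of $Q_1$ and $Q_2$, forcing $\delta(\bm p)=0$ via the resultant. I will check directly that this again forces rank $\mathcal A(\bm p)\le 1$ or a degenerate ellipse.
\end{itemize}
If (b) proves awkward, the fallback is to appeal directly to the statement preceding the corollary. That statement already asserts that the numbers of asymptotic directions are $0,1,2$ by type, and the type corresponds to the sign of $\delta(\bm p)$. Combining this with \eqref{eq:delta} gives the claim for asymptotic directions immediately, leaving only Step 1 for binormals.

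\emph{Step 4: $q\not\equiv0$.} Finally, rule out $q\equiv0$. That would require $h_1^2=\lambda^2$, $h_1h_2=0$ and $h_2^2=\mu^2$. A short case check shows that each solution yields rank $\mathcal A(\bm p)\le 1$, where
\[
\mathcal A(\bm p)=\begin{pmatrix}h_1+\lambda&0&h_1-\lambda\\ h_2&\mu&h_2\end{pmatrix}.
\]
This is again excluded by the hypothesis.
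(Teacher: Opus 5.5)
Your main route works after one repair (below), but it is not the paper's route; your fallback is. The paper proves this corollary in one line. It takes from the discussion before Proposition~\ref{prop:second}, which rests on cited results, that away from inflection and degenerate inflection points the numbers of asymptotic and of binormal directions are $0$, $1$, $2$ according as $\bm p$ is elliptic, parabolic or hyperbolic. That count is the number of tangent lines from $\bm 0_{\bm p}$ to $\mathcal E_{\bm p}$. It then combines this with the preceding corollary, which reads the type off the sign of $\delta(\bm p)=4(h_1^2\mu^2+h_2^2\lambda^2-\lambda^2\mu^2)$.

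You instead count binormal directions directly as the projective roots of $q$, and transfer the count to asymptotic directions through the kernel correspondence. You use the hypothesis only in the form $\mathrm{rank}\,\mathcal A(\bm p)=2$: to get $q\not\equiv0$, one-dimensional kernels, and injectivity. This makes explicit where the hypothesis enters and avoids the geometric tangent-line count. So it also covers, uniformly, the allowed case of a segment whose line misses $\bm 0_{\bm p}$. For that, in the surjectivity step take $\bm\nu$ orthogonal to the span of $\kappa_n(\bm p,\bm w_0)$ and $\partial_\theta\kappa_n(\bm p,\bm w_0)$, not to a ``tangent line'', since $\partial_\theta\kappa_n=0$ at an endpoint of a segment. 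The paper's route buys brevity at the price of leaning on cited facts.

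For the asymptotic half you could even skip the bijection. The coefficients of the equation in Corollary~\ref{cor:BDE_of_asymptotic} are the $2\times2$ minors of $\mathcal A(\bm p)$, so that equation is a nonzero binary form exactly when the rank is $2$, and its discriminant is again $\delta(\bm p)$. This needs the asymptotic directions to be exactly its solutions.

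The repair concerns Step 3(b). Do not argue via a common linear factor of $Q_1,Q_2$ and $\delta(\bm p)=0$: that is weaker than what you have and gives no contradiction. For instance, $Q_1=u^2$ and $Q_2=2uv$ share the factor $u$, are linearly independent, and give a genuine parabolic point. Likewise, ``or a degenerate ellipse'' would not contradict the hypothesis, since a segment not lying on a line through $\bm 0_{\bm p}$ is allowed.

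What you actually have is $II_{\bm p}(\bm w_0,\cdot)\equiv0$. This says $\bm w_0$ lies in the kernel of both symmetric matrices $\begin{pmatrix} l_i & m_i\\ m_i & n_i\end{pmatrix}$, $i=1,2$. Hence each $Q_i$ is a multiple of $\ell^2$, where $\ell$ is the linear form vanishing on $\bm w_0$. So $Q_1$ and $Q_2$ are proportional and $\mathrm{rank}\,\mathcal A(\bm p)\le1$, which the hypothesis excludes. With this, Steps 1--4 give a complete proof.
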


%%%%%%%%%%%%%%%%%%%%%%%%%%%%%%%%%

\section{Applications to projections into 3-space}
\label{sec:projection}

The study of singularities of projections of surfaces in $\R^4$ into $3$-spaces goes back to Mond \cite{Mond1982, Mond1985}. 
He used the projection-genericity/transversality framework developed by David \cite{David1983}. 
The geometry and duality of families of such projections were further studied by Bruce and Nogueira \cite{BN1998}.
Since then, related studies have been carried out (see, for example, \cite{BROS2019, NBT2007, OST2015}).

The family of projections is given by
$$
\Pi\colon M \times S^3 \to TS^3,\quad \Pi(\bm{p},\bm{v})=(\bm{v}, \bm{p}-\langle \bm{p},\bm{v}\rangle \bm{v}).
$$
For a fixed direction $\bm{v}$, the projection can be viewed locally at a point $\bm{p}\in M$ as a map-germ 
$$
\pi_{\bm{v}}\colon (\R^2,\bm{0}) \to (\R^3,\bm{0}).
$$
The generic singularities of $\pi_{\bm{v}}$ are classified under the $\mathcal{A}$-equivalence. 
Here, two map-germ $\alpha, \beta\colon(\R^2,\bm{0})\to(\R^3,\bm{0})$ are said to be \textit{$\mathcal{A}$-equivalent} if $\beta=\Psi \circ \alpha \circ \Phi^{-1}$ for some diffeomorphisms-germs $\Phi$ and $\Psi$ of the source and target, respectively. 
Those of $\mathcal{A}_e$-codimension at most 3, the dimension  of $S^3$, are listed in Table~\ref{tab:singularities_of_projections} (see \cite{Mond1985}).

\begin{center}
\begin{table}[ht!]
\caption{Generic local singularities of the projection of $M$ to 3-space \cite{Mond1985}.}
\begin{tabular}{ccc}
\hline
Name & Normal form & $\mathcal{A}_e$-codimension \\ \hline
Immersion & $(x,y,0)$ & $0$ \\
Cross cap ($S_0$) & $(x,y^2,x y)$ & $0$ \\
$S_k^\pm$ & $(x,y^2,y^3 \pm x^{k+1}y)$, $k=1,2,3$ & $k$ \\
$B_k^\pm$ & $(x,y^2,x^2y \pm y^{2k+1})$, $k=2,3$ & $k$ \\
$C_3^\pm$ & $(x,y^2,x y^3\pm x^3 y)$, & $3$ \\
$H_k$ & $(x,x y + y^{3k-1},y^3)$, $k=2,3$ & $k$\\
$P_3$ ${}^\ast$ & $(x,x y+y^3,x y^2+c y^4), c\ne 0,\frac12, 1, \frac32$ & $3$ \\ \hline
\end{tabular}

\vspace*{3pt}
${}^\ast$The codimension of $P_3$ is that of its stratum.
\label{tab:singularities_of_projections}
\end{table}
\end{center}

The projection $\pi_{\bm{v}}$ is singular at $\bm{p}$ if and only if $\bm{v}\in T_{\bm{p}}M$. 
In this case, the image of $\pi_{\bm{v}}(M)$ is a singular surface at $ \pi_{\bm{v}}(\bm{p})$.
The singularity is a cross cap when $\bm{v}$ is a non-asymptotic direction at $\bm{p}$. 
The codimension-2 singularities appear generically on loci on $M$, whereas the codimension-3 singularities appear at isolated points on these loci (see \cite{BN1998, BT2002}). 

The projection $\pi_{\bm{v}}$ is an immersion at $\bm{p}$ if and only if $\bm{v}\not\in T_{\bm{p}}M$. 
In this case, the image $\pi_{\bm{v}}$ is a regular surface at $\pi_{\bm{v}}(\bm{p})$ (see \cite{NBT2007}).

After a Euclidean motion, we may assume that $\bm{p}=\bm{0}$ and write $M$ locally in the Monge normal form given by Theorem~\ref{thm:Euclidean_Monge_normal_form}:
$$
X\colon(\R^2,\bm{0})\to(\R^4,\bm{0}), \quad X(u,v)=(u,v,f(u,v),g(u,v)).
$$
We then have an adapted orthonormal frame $\{\bm{e}_1,\bm{e}_2,\bm\nu_1,\bm\nu_2\}$ of $\R^4$, where $\{\bm{e}_1,\bm{e}_2\}$ is an orthonormal frame of $T_{\bm{p}}M$ and $\{\bm\nu_1,\bm\nu_2\}$ is an orthonormal frame of $N_{\bm{p}}M$. 
In particular,
$$
X_u(0,0)=\bm{e}_1,
\qquad
X_v(0,0)=\bm{e}_2,
$$
and thus $\text{det}[I_{\bm{p}}]=1$. 
Via $dX_{\bm{0}}$, we identify $\R^2$ with $T_{\bm{p}}M$ by
$$
(u,v)\leftrightarrow u\bm{e}_1+v\bm{e}_2.
$$

We take the unit vectors in $S(T_{\bm{p}}M)$ as
$$
\bm{w}=\cos\theta\bm{e}_1+\sin\theta\bm{e}_2,\quad
\bm{w}^{\perp}=-\sin\theta\bm{e}_1+\cos\theta\bm{e}_2,\quad \theta\in\left(-\frac{\pi}2,\frac{\pi}2\right] 
$$
and the unit vectors in $S(N_{\bm{p}}M)$ as
$$
\bm\nu=\cos\phi\bm\nu_1+\sin\phi\bm\nu_2,\quad
\bm\nu^{\perp}=-\sin\phi\bm\nu_1+\cos\phi\bm\nu_2,\quad \phi\in\left(-\frac{\pi}2,\frac{\pi}2\right].
$$
Then the projection direction $\bm{v}$ of $\pi_{\bm{v}}$ can be written as 
$$
\bm{v}=\cos\psi \bm{w}+\sin\psi\bm\nu^{\perp},\quad \psi\in\left(-\frac{\pi}2,\frac{\pi}2\right].
$$
Setting
$$
\bm\eta_1=-\bm{w}^{\perp},\quad
\bm\eta_2=\sin\psi \bm{w}-\cos\psi \bm\nu^{\perp},\quad
\bm\eta_3=\bm\nu,
$$
we obtain the positively oriented orthonormal frame $\{\bm\eta_1,\bm\eta_2, \bm{v}, \bm\eta_3\}$ of $\R^4$ and the positively oriented orthonormal frame $\{\bm\eta_1,\bm\eta_2,\bm\eta_3\}$ of the hyperplane $\bm{v}^{\perp}$. 
Hence, the projection $\pi_{\bm{v}}$ on $M$ along $\bm{v}$ to $\bm{v}^{\perp}$ is expressed by
$$
\pi_{\bm{v}}(u,v) = (\langle X(u,v),\bm\eta_1 \rangle, \langle X(u,v),\bm\eta_2 \rangle, \langle X(u,v),\bm\eta_3 \rangle).
$$

We denote the projected surface $\pi_{\bm{v}}(M)$ by $\widetilde{M}$.
We identify $\bm{p}\in M$ with $\pi_{\bm{v}}(0,0)\in\widetilde{M}$ and regard $\bm{p}$ as the origin of $\bm{v}^\perp$. 
In particular, $\bm{p}=\bm{0}\in \R^4$.

%%%%%%%%%%%%%%%%%%%%%%%%%%%%%%%

\subsection{The geometry of projected regular surfaces}
\label{sec:projected_regular_surface}

We first consider the case $\bm{v}\not\in T_{\bm{p}}M$, which is equivalent to $\sin\psi\ne0$. 
We may assume that $\sin\psi>0$ by \textR{suitably choosing the orientations of $\bm\nu^\perp$ and $\bm\eta_2$,} if necessary. 
In this case, the projected surface $\widetilde{M}$ is a regular surface at $\bm{0}=\pi_{\bm{v}}(0,0)$.

Via $d(\pi_{\bm{v}})_{\bm{p}}$, we identify $T_{\bm{p}}\widetilde{M}$ with $T_{\bm{p}}M$. 
The first fundamental form of $\widetilde{M}$ is given by
$$
\widetilde{I}_{\bm{p}}\colon T_{\bm{p}}\widetilde{M} \times T_{\bm{p}}\widetilde{M} \to \R,\quad 
\widetilde{I}_{\bm{p}}(\bm{w},\bm{w})=\widetilde{E}du^2+2\widetilde{F}du dv+\widetilde{G}dv^2, \quad \bm{w}\in T_{\bm{p}}\widetilde{M},
$$
where 
$$
\widetilde{E}=\langle (\pi_{\bm{v}})_u, (\pi_{\bm{v}})_u\rangle, \quad 
\widetilde{F}=\langle (\pi_{\bm{v}})_u, (\pi_{\bm{v}})_v\rangle, \quad
\widetilde{G}=\langle (\pi_{\bm{v}})_v, (\pi_{\bm{v}})_v\rangle. 
$$
By direct calculation, we obtain 
\begin{equation*}
\begin{split}
\widetilde{E}(0,0)&=\sin^2\theta+\cos^2\theta\sin^2\psi,\\ 
\widetilde{F}(0,0)&=-\cos\theta\sin\theta\cos^2\psi,\\
\widetilde{G}(0,0)&=\cos^2\theta+\sin^2\theta\sin^2\psi.
\end{split}
\end{equation*}
It follows that $\text{det}\left[\widetilde{I}_{\bm{p}}\right] = \sin^2\psi \ne0$ at $\bm{0}$. 
Thus, we directly confirm that $\widetilde{M}$ is regular at $\bm{0}$.

The unit normal vector field to $\widetilde{M}$ is given by
$$
\widetilde{\bm{n}}=\dfrac{(\pi_{\bm{v}})_u \times (\pi_{\bm{v}})_v}{\lVert (\pi_{\bm{v}})_u \times (\pi_{\bm{v}})_v \rVert},
$$
and we obtain
$$
\widetilde{\bm{n}}(0,0) = (0,0,1).
$$
Hence, 
$$
\widetilde{\bm{n}}(0,0)=\bm\nu
$$
under the identification of $\bm{v}^{\perp}$ with the subspace of $\R^4$ spanned by $\{\bm\eta_1,\bm\eta_2,\bm\eta_3\}$.

The second fundamental form of $\widetilde{M}$ is given by
$$
\widetilde{II}_{\bm{p}}\colon T_{\bm{p}}\widetilde{M} \times T_{\bm{p}}\widetilde{M} \to \R,\quad 
\widetilde{II}_{\bm{p}}(\bm{w},\bm{w}) = \widetilde{l} du^2+2\widetilde{m} du dv+\widetilde{n} dv^2,\quad \bm{w}\in T_{\bm{p}}\widetilde{M},
$$
where 
$$
\widetilde{l}=\langle (\pi_{\bm{v}})_{uu},\widetilde{\bm{n}} \rangle,\quad
\widetilde{m}=\langle (\pi_{\bm{v}})_{uv},\widetilde{\bm{n}} \rangle,\quad 
\widetilde{n}=\langle (\pi_{\bm{v}})_{vv},\widetilde{\bm{n}} \rangle.
$$
A direct calculation gives that
\begin{equation}
\label{eq:coefficients_of_II}
\begin{split}
\widetilde{l}(0,0)&=(h_1 +\lambda)\cos\phi+h_2\sin\phi,\\
\widetilde{m}(0,0)&=\mu\sin\phi,\\
\widetilde{n}(0,0)&=(h_1-\lambda)\cos\phi+h_2\sin\phi.
\end{split}
\end{equation}
It follows from \eqref{eq:det_S}, \eqref{eq:det_second} and \eqref{eq:coefficients_of_II} that, at $\bm{0}$,  
$$
\text{det}\left[\widetilde{II}_{\bm{p}}\right] = (h_1^2-\lambda^2)\cos^2\phi+2h_1 h_2 \cos\phi\sin\phi+(h_2^2-\mu^2)\sin^2\phi = \text{det}[S_{\bm{p},\bm\nu}]. 
$$
Therefore, the Gaussian curvature $\widetilde{K}$ of $\widetilde{M}$ at $\bm{0}$ is given by
$$
\widetilde{K}=\dfrac{\text{det}\left[\widetilde{II}_{\bm{p}}\right]}{\text{det}\left[\widetilde{I}_{\bm{p}}\right]} = \dfrac{\text{det}[S_{\bm{p},\bm\nu}]}{\sin^2\psi}.
$$
Hence, we have the following proposition:
\begin{proposition}
\label{prop:parabolicity_of_widetilde_M}
A point $\bm{0} \in \widetilde{M}$ is an elliptic, parabolic, or hyperbolic point if and only if $\mathrm{det}\left[S_{\bm{p},\bm\nu}\right]$ is positive, zero,  or negative, respectively. 
\end{proposition}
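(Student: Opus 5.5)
The plan is to reduce the statement to the sign of the Gaussian curvature $\widetilde{K}$ of $\widetilde{M}$ at $\bm{0}$. We then apply the classical classification of points of a regular surface in $\R^3$: a point is elliptic, parabolic, or hyperbolic exactly when $\widetilde{K}>0$, $\widetilde{K}=0$, or $\widetilde{K}<0$. Equivalently, the sign of $\det[\widetilde{II}_{\bm{p}}]$ decides it, because $\det[\widetilde{I}_{\bm{p}}]>0$.

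First, I would recall that under the standing assumption $\bm{v}\notin T_{\bm{p}}M$ we have $\sin\psi\neq 0$. Hence $\det[\widetilde{I}_{\bm{p}}]=\sin^2\psi>0$, so $\widetilde{M}$ is regular at $\bm{0}$ and its first fundamental form is positive definite there.

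Second, I would use the computed coefficients \eqref{eq:coefficients_of_II} of $\widetilde{II}_{\bm{p}}$. They coincide entrywise with the matrix $[II_{\bm{p},\bm\nu}]$ of Proposition~\ref{prop:second}, equation \eqref{eq:matrix_of_II}. Therefore $\det[\widetilde{II}_{\bm{p}}]=\det[II_{\bm{p},\bm\nu}]$. By \eqref{eq:det_second}, together with $\det[I_{\bm{p}}]=1$ in the Monge normal form and \eqref{eq:det_S}, this equals $\det[S_{\bm{p},\bm\nu}]$.

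Third, combining these gives $\widetilde{K}=\det[S_{\bm{p},\bm\nu}]/\sin^2\psi$. Since $\sin^2\psi>0$, the sign of $\widetilde{K}$ equals the sign of $\det[S_{\bm{p},\bm\nu}]$, and the three cases follow.

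The only point needing care is the computation behind \eqref{eq:coefficients_of_II}. One must check that $\widetilde{\bm{n}}(0,0)=\bm\nu$ and that the second derivatives of $\pi_{\bm{v}}$ at the origin have $\bm\eta_3$-component $\langle X_{ij}(0,0),\bm\nu\rangle$. This holds because $\bm\eta_3=\bm\nu$, and the normalization of $\widetilde{\bm{n}}$ affects only its length, not its direction. Since the paper states this computation before the proposition, no further obstacle is expected.
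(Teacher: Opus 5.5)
Your proposal is correct and follows the paper's argument exactly. You identify $\det[\widetilde{II}_{\bm p}]$ with $\det[II_{\bm p,\bm\nu}]=\det[S_{\bm p,\bm\nu}]$ via \eqref{eq:coefficients_of_II}, \eqref{eq:det_second} and \eqref{eq:det_S}, then read the point type off the sign of $\widetilde K=\det[S_{\bm p,\bm\nu}]/\sin^2\psi$ with $\sin^2\psi>0$; your remark on $\widetilde{\bm n}(0,0)=\bm\nu$ is harmless, since flipping the normal would not change a $2\times 2$ determinant.
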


A direct calculation shows that the mean curvature $\widetilde{H}$ of $\widetilde{M}$ is given by
\begin{equation*}
%\label{eq:mean}
%\widetilde{H} = -\dfrac{B(\theta,\phi,\psi)}{2\sin^2\psi},
\textR{\widetilde{H} = -\dfrac{B(\theta,\phi,\psi)}{2A(\psi)}}
\end{equation*}
where 
$$
\textR{A(\psi)=\sin^2\psi=\text{det}\left[\widetilde{I}_{\bm{p}}\right],} \quad 
B(\theta,\phi,\psi)=\cos^2\psi\langle II_{\bm{p}}(\bm{w},\bm{w}),\bm\nu\rangle +2\sin^2\psi\langle \bm{H},\bm\nu\rangle.
$$
It follows that we have the following proposition:
\begin{proposition}
\textR{At a point $\bm{0} \in \widetilde{M}$}, $\widetilde{H}=0$ if and only if $B(\theta,\phi,\psi)=0$.
\end{proposition}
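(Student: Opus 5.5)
The proposition is essentially immediate from the displayed formula for the mean curvature of $\widetilde{M}$ at $\bm{0}$, namely
$$
\widetilde{H} = -\dfrac{B(\theta,\phi,\psi)}{2A(\psi)}, \qquad A(\psi)=\sin^2\psi=\det\left[\widetilde{I}_{\bm{p}}\right].
$$
So the plan is to check that the denominator never vanishes in the regime under consideration. Then $\widetilde{H}$ vanishes exactly when its numerator does.

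First I recall the standing assumption of Subsection~\ref{sec:projected_regular_surface}: $\bm{v}\notin T_{\bm{p}}M$, which is equivalent to $\sin\psi\neq0$, normalized so that $\sin\psi>0$. By the computation of $\widetilde{E},\widetilde{F},\widetilde{G}$ at $(0,0)$, we have $\det[\widetilde{I}_{\bm{p}}]=\sin^2\psi>0$. Hence $A(\psi)\neq0$ and $-1/(2A(\psi))$ is a nonzero finite real number. Therefore $\widetilde{H}=0$ if and only if $B(\theta,\phi,\psi)=0$.

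The only real content is the derivation of the mean curvature formula itself, which the paper states as a direct calculation. If it needs justifying, I would compute
$$
\widetilde{H}=\frac{\widetilde{E}\widetilde{n}-2\widetilde{F}\widetilde{m}+\widetilde{G}\widetilde{l}}{2(\widetilde{E}\widetilde{G}-\widetilde{F}^2)}
$$
using the values of $\widetilde{E},\widetilde{F},\widetilde{G}$ and $\widetilde{l},\widetilde{m},\widetilde{n}$ at the origin from \eqref{eq:coefficients_of_II}. In the numerator, I would group the terms containing $\cos^2\psi$ and $\sin^2\psi$ separately:

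\begin{itemize}
\item The $\sin^2\psi$ part is $\cos^2\theta\,\widetilde{n}+\sin^2\theta\,\widetilde{l}+\cos^2\theta\,\widetilde{l}+\dots$, which collapses to $\widetilde{l}+\widetilde{n}=2\langle\bm{H},\bm\nu\rangle$.
\item The $\cos^2\psi$ part recombines to $\sin^2\theta\,\widetilde{n}+2\sin\theta\cos\theta\,\widetilde{m}+\cos^2\theta\,\widetilde{l}$. Up to the sign convention on $\widetilde{\bm{n}}$, this is $\langle II_{\bm{p}}(\bm{w},\bm{w}),\bm\nu\rangle$, using Proposition~\ref{prop:second}.
\end{itemize}

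The main obstacle is bookkeeping of the orientation sign, which produces the overall minus sign. It does not affect the vanishing criterion, so the proposition follows once $A(\psi)\neq0$ is noted.
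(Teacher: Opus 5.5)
Your proposal is correct and matches the paper's argument: the statement follows directly from the formula $\widetilde{H}=-B(\theta,\phi,\psi)/(2A(\psi))$ together with $A(\psi)=\sin^2\psi\neq0$, which holds under the standing assumption $\bm{v}\notin T_{\bm{p}}M$. Your optional check of the formula, writing $\widetilde{E}=\sin^2\psi+\sin^2\theta\cos^2\psi$ and $\widetilde{G}=\sin^2\psi+\cos^2\theta\cos^2\psi$, is also sound; the overall minus sign is only a convention and plays no role in the vanishing criterion.
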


Let $X\colon(\R^2,\bm{q})\to(\R^3,\bm{p})$ be a local parameterization of a regular surface $S\subset \R^3$.  
The distance-squared function on $S$ from a point $\bm{a}\in\R^3$ is given by 
$$
d_{\bm{a}}\colon (\R^2,\bm{q})\to \R, \quad d_{\bm{a}}(u,v) = \lVert X(u,v) - \bm{a} \rVert^2.
$$
The focal geometry of $S$ can be characterized in terms of the singularities of $d_{\bm{a}}$ as follows (see \cite{Porteous1971}; see also \cite{FH2012-1} for another example): 
\begin{align*}
A_{\ge1}\text{-singularity at }\bm{q}& \Leftrightarrow \bm{a}= \bm{p} + \R \bm{n}(\bm{q}), \\
A_{\ge2}\text{-singularity at }\bm{q}& \Leftrightarrow \textR{\bm{p}\text{ is a  non-umbilic point and }\bm{a}=\bm{p}+\dfrac1{\kappa_i(\bm{q})} \bm{n}(\bm{q})\quad (\kappa_i(\bm{q})\ne0)}, \\
A_{\ge3}\text{-singularity at }\bm{q}& \Leftrightarrow \textR{\bm{p}\text{ is a  non-umbilic point, }\bm{a}=\bm{p}+\dfrac1{\kappa_i(\bm{q})} \bm{n}(\bm{q})\quad (\kappa_i(\bm{q})\ne0)}\\
&\textR{\quad\text{ and }\bm{p}\text{ is a ridge point relative to }\bm{v}_i}, \\
D_{\ge4}\text{-singularity at }\bm{q}& \Leftrightarrow \bm{p}\text{ is a  non-flat umbilic point and }\bm{a}=\bm{p}+\dfrac1{\kappa(\bm{q})} \bm{n}(\bm{q}),
\end{align*}
where $\bm{n}$ is the unit normal vector field to $S$, $\kappa_i$ $(i=1,2)$ are the principal curvatures of $S$, $\bm{v}_i$ are the principal directions corresponding to $\kappa_i$ and $\kappa$ is the common principal curvature at an umbilic point. 
The notation for $A_k$- and $D_k$-singularities follows that of \cite{Arnold1986}. 
We now apply these facts to the projected surface $\widetilde{M}$.
Recall that $\bm{p}=\bm{0}$ and that $\widetilde{\bm{n}}(0,0)=\bm\nu$.

The distance-squared function on $\widetilde{M}$ from a point $\bm{a}\in \bm{v}^{\perp}\subset \R^4$ is given by
$$
d_{\bm{a}}\colon (\R^2,\bm{0})\to\R,\quad d_{\bm{a}}(u,v) = \lVert \pi_{\bm{v}}(u,v)-\bm{a}\rVert^2.
$$

We easily show that $d_{\bm{a}}$ has an $A_{\ge1}$-singularity at $(0,0)$ if and only if
$$
\langle (\pi_{\bm{v}})_u(0,0), \bm{p}-\bm{a}\rangle = \langle (\pi_{\bm{v}})_v(0,0), \bm{p}-\bm{a}\rangle = 0
$$
which is equivalent to 
\begin{equation}
\label{eq:A_1-singularity_of_d}
\begin{array}{ccccc}
 & \bm{v}^\perp & \subset & \R^4 & \\ [-1mm]
 & \rotatebox[origin=c]{90}{$\in$}
 & & \rotatebox[origin=c]{90}{$\in$} & \\ [-1mm]
\bm{a}= & r\widetilde{\bm{n}}(0,0)
& = & r\bm\nu,
\end{array}
\end{equation}
where $|r|$ is the radius of the sphere $\{\bm{x}\in\bm{v}^{\perp}\colon\lVert \bm{x}-\bm{a}\rVert^2=r^2\}$. 

\textR{Assume that \eqref{eq:A_1-singularity_of_d} holds.}
Then we have 
\begin{equation}
\label{eq:Hessian_of_d}
\begin{split}
\mathcal{H}(d_{\bm{a}})(0,0) = 
2\begin{pmatrix}
c_{20} & c_{11} \\
c_{11} & c_{02} 
\end{pmatrix}
=2\left(
\begin{pmatrix}
\widetilde{E} & \widetilde{F} \\
\widetilde{F} & \widetilde{G}
\end{pmatrix}
-r
\begin{pmatrix}
\widetilde{l} & \widetilde{m} \\
\widetilde{m} & \widetilde{n}
\end{pmatrix}
\right)
=2\left(\left[\widetilde{I}_{\bm{p}}\right]-r\left[\widetilde{II}_{\bm{p}}\right]\right),
\end{split}
\end{equation}
where $\mathcal{H}(d_{\bm{a}})$ is the Hessian matrix of $d_{\bm{a}}$ and 
\begin{equation*}
%\label{eq:coefficients_Hessian_of_d}
\begin{split}
c_{20} &= \sin^2\theta+\cos^2\theta\sin^2\psi-r((h_1 +\lambda)\cos\phi+h_2\sin\phi) =\widetilde{E}-r\widetilde{l}\\
c_{11} & = -\cos\theta\sin\theta\cos^2\psi-r\mu\sin\phi = \widetilde{F}-r\widetilde{m} \\
c_{02} & = \cos^2\theta+\sin^2\theta\sin^2\psi-r((h_1-\lambda)\cos\phi+h_2\sin\phi) = \widetilde{G}-r\widetilde{n}.
\end{split}
\end{equation*}
Therefore, 
\begin{equation}
\label{eq:equation_of_kappa}
%\text{det}\mathcal{H}(d_{\bm{a}})(0,0)=4\,\text{det}\left[\widetilde{I}_{\bm{p}}\right]\left(1-r\widetilde{\kappa}_1(0,0)\right)\left(1-r\widetilde{\kappa}_2(0,0)\right),
\textR{\text{det}\mathcal{H}(d_{\bm{a}})(0,0)=4A(\psi)\left(1-r\widetilde{\kappa}_1(0,0)\right)\left(1-r\widetilde{\kappa}_2(0,0)\right),}
\end{equation}
where $\widetilde{\kappa}_i$ $(i=1,2)$ are the principal curvatures of $\widetilde{M}$. 
Moreover, 
\begin{equation}
\label{eq:det_of_Hessian_of_d}
\text{det}\mathcal{H}(d_{\bm{a}})(0,0)=4(A(\psi)+r B(\theta,\phi,\psi)+r^2Q(\phi)),
\end{equation}
where
$$
%A(\psi)=\sin^2\psi=\text{det}\left[\widetilde{I}_{\bm{p}}\right] \quad \text{and} 
\quad Q(\phi) = \text{det}\left[\widetilde{II}_{\bm{p}}\right]=\text{det}[S_{\bm{p},\bm\nu}].
$$
\textR{If $\bm{p}$ is a parabolic point of $\widetilde{M}$ then $\widetilde{\kappa}_i(0,0)\ne0$ and $\widetilde{\kappa}_j(0,0)=0$ for some $i\ne j$.
In this case, $Q(\phi)=0$, and hence 
$$
\text{det}\mathcal{H}(d_{\bm{a}})(0,0) = 4(A(\psi)+r B(\theta,\phi,\psi)).
$$
However, since $\widetilde{\kappa}_i(0,0)\ne0$, it follows from \eqref{eq:equation_of_kappa} and \eqref{eq:det_of_Hessian_of_d} that $B(\theta,\phi,\psi)\ne0$. 
Hence, 
$$
r= -\dfrac{A(\psi)}{B(\theta,\phi,\psi)} = \dfrac{1}{\widetilde{\kappa}_i(0,0)}
$$
is a simple root of \eqref{eq:det_of_Hessian_of_d}.
On the other hand, }it follows from \eqref{eq:A_1-singularity_of_d}, \eqref{eq:equation_of_kappa} and \eqref{eq:det_of_Hessian_of_d} that  
\begin{equation}
\label{eq:umbilic}
B(\theta,\phi,\psi)^2-4A(\psi)Q(\phi)=0
\end{equation} 
if and only if $r=1/\tilde{\kappa}(0,0)$ is a double root of \eqref{eq:det_of_Hessian_of_d}.
In this case, 
$$
\bm{a}=\dfrac{1}{\widetilde{\kappa}(0,0)}\widetilde{\bm{n}}(0,0)= \dfrac{1}{\widetilde{\kappa}(0,0)} \bm\nu,
$$
where $\tilde{\kappa}$ is the common principal curvature at $\bm{p}$.
Hence, the following propositions hold:

\begin{proposition}
\label{prop:A_2-singularity_of_d}
%Suppose that $\bm{0}$ is a non-parabolic point of $\widetilde{M}$. 
%Then the following conditions are equivalent: 
\textR{The following conditions are equivalent: }
\begin{enumerate}
\item 
$d_{\bm{a}}$ has an $A_{\ge2}$-singularity at $(0,0)$;
\item
\textR{$\bm{0}$} is a non-umbilic point of $\widetilde{M}$ and, for some $i=1,2$,  
\begin{equation}
\label{eq:A_2-singularity_of_d}
\begin{array}{cccccc}
 & \bm{v}^\perp & \subset & \R^4 & \\ [-1mm]
 & \rotatebox[origin=c]{90}{$\in$}
 & & \rotatebox[origin=c]{90}{$\in$} & \\ [-1mm]
\bm{a}= & \dfrac1{\widetilde{\kappa}_i(0,0)}\widetilde{\bm{n}}(0,0)
& = &\dfrac1{\widetilde{\kappa}_i(0,0)} \bm\nu & \textR{(\widetilde{\kappa}_i(0,0)\ne0)};
\end{array}
\end{equation}
\item
\textR{\eqref{eq:A_1-singularity_of_d} holds and} $r=1/\widetilde{\kappa_i}(0,0)$ is a simple root of \eqref{eq:det_of_Hessian_of_d}.
\end{enumerate}
\end{proposition}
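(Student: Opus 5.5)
We prove the cycle of implications (1)$\Rightarrow$(3)$\Rightarrow$(2)$\Rightarrow$(1). Everything reduces to the Hessian identity \eqref{eq:Hessian_of_d}, its determinant \eqref{eq:det_of_Hessian_of_d}, and the factorization \eqref{eq:equation_of_kappa}, together with the Porteous characterization quoted above for regular surfaces in $\R^3$. We apply that characterization to $\widetilde{M}\subset\bm{v}^\perp\cong\R^3$, which is legitimate because $\sin\psi\neq0$ makes $\widetilde{M}$ regular at $\bm{0}$ with unit normal $\widetilde{\bm{n}}(0,0)=\bm\nu$.

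For (1)$\Rightarrow$(3), note first that an $A_{\ge2}$-singularity is in particular a critical point, so \eqref{eq:A_1-singularity_of_d} holds and $\bm{a}=r\bm\nu$. The singularity is then $A_{\ge2}$ exactly when the Hessian is degenerate but nonzero, i.e.\ $\det\mathcal{H}(d_{\bm a})(0,0)=0$ while $[\widetilde{I}_{\bm p}]-r[\widetilde{II}_{\bm p}]\neq0$.

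By \eqref{eq:equation_of_kappa}, the vanishing of the determinant forces $r=1/\widetilde{\kappa}_i(0,0)$ for some $i$ with $\widetilde{\kappa}_i(0,0)\neq0$ (note $r\ne0$, since $A(\psi)>0$).

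It remains to show the root is simple, and this is the main point. Suppose instead it were a double root of $A+rB+r^2Q$. Then \eqref{eq:umbilic} holds. Comparing \eqref{eq:equation_of_kappa} with \eqref{eq:det_of_Hessian_of_d} as polynomials in $r$ gives $\widetilde{\kappa}_1(0,0)=\widetilde{\kappa}_2(0,0)=\widetilde{\kappa}$, so $\bm 0$ is umbilic. At an umbilic, $[\widetilde{II}_{\bm p}]=\widetilde{\kappa}[\widetilde{I}_{\bm p}]$, and hence the Hessian $2([\widetilde{I}_{\bm p}]-r[\widetilde{II}_{\bm p}])$ vanishes identically at $r=1/\widetilde{\kappa}$. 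That is a $D_{\ge4}$ (corank 2) singularity, not $A$-type, which is a contradiction. The parabolic case ($Q=0$) was handled just before the statement: there the root $-A/B=1/\widetilde{\kappa}_i$ is automatically simple.

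For (3)$\Rightarrow$(2), a simple root of \eqref{eq:det_of_Hessian_of_d} means the discriminant \eqref{eq:umbilic} is nonzero, or $Q=0$ with $B\neq0$. In either case \eqref{eq:equation_of_kappa} shows $\widetilde{\kappa}_1\neq\widetilde{\kappa}_2$, so $\bm0$ is non-umbilic. Also $\bm{a}=r\bm\nu=\widetilde{\kappa}_i^{-1}\widetilde{\bm n}(0,0)$, which is \eqref{eq:A_2-singularity_of_d}.

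For (2)$\Rightarrow$(1), we invoke the quoted Porteous criterion for $A_{\ge2}$ directly, with $\bm{p}=\bm0$, $\bm n=\widetilde{\bm n}$ and $\kappa_i=\widetilde{\kappa}_i$. Alternatively, one can argue concretely: $\det\mathcal H=0$ by \eqref{eq:equation_of_kappa}, while non-umbilicity means $[\widetilde{II}_{\bm p}]$ is not a multiple of $[\widetilde{I}_{\bm p}]$, so $\mathcal H\neq0$ and the corank is 1.

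The main obstacle is the simplicity step in (1)$\Rightarrow$(3). It requires matching the double-root condition precisely with umbilicity, via the coefficient comparison between \eqref{eq:equation_of_kappa} and \eqref{eq:det_of_Hessian_of_d}, including the degenerate case $Q(\phi)=0$.
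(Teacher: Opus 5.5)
Your proof is correct and takes essentially the paper's route. You reduce to $\bm{a}=r\bm\nu$ and use the identity $4A(\psi)(1-r\widetilde{\kappa}_1)(1-r\widetilde{\kappa}_2)=4(A+rB+r^2Q)$, treating the parabolic case $Q(\phi)=0$ separately, identifying double roots with umbilics and appealing to Porteous's criterion for $A_{\ge2}$; the only small difference is that you get simplicity of the root in (1)$\Rightarrow$(3) directly from the vanishing of the Hessian at an umbilic (corank $2$) rather than via Porteous's characterization, which makes that step self-contained.
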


\begin{proposition}
\label{prop:D_2-singularity_of_d}
%Suppose that $\bm{p}$ is a non-parabolic point of $\widetilde{M}$. 
%Then the following conditions are equivalent: 
\textR{The following conditions are equivalent: }
\begin{enumerate}
\item 
$d_{\bm{a}}$ has a $D_{\ge4}$-singularity at $(0,0)$;
\item
$\bm{a}=1/\widetilde{\kappa}(0,0)\widetilde{\bm{n}}(0,0)= 1/\widetilde{\kappa}(0,0) \bm\nu$ and \textR{$\bm{0}$} is a non-flat umbilic point of $\widetilde{M}$;
\item
\textR{\eqref{eq:A_1-singularity_of_d} holds and} $B(\theta,\phi,\psi)^2-4A(\psi)Q(\phi)=0$ holds.
\end{enumerate}
\end{proposition}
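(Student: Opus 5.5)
The proof combines Porteous's characterization of the $D_{\ge4}$-singularity, quoted above, with the computation of $\det\mathcal{H}(d_{\bm{a}})(0,0)$ in \eqref{eq:equation_of_kappa} and \eqref{eq:det_of_Hessian_of_d}. The equivalence (1)$\Leftrightarrow$(2) is Porteous's $D_{\ge4}$ statement applied to the regular surface $\widetilde{M}\subset\bm{v}^\perp\cong\R^3$ at $\bm{0}=\pi_{\bm{v}}(0,0)$. We only need $\widetilde{\bm{n}}(0,0)=\bm\nu$, which was computed earlier, to rewrite the focal point as $\frac{1}{\widetilde{\kappa}(0,0)}\bm\nu$. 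So the real content is (2)$\Leftrightarrow$(3).

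\textbf{(2)$\Rightarrow$(3).} Since $\bm{a}$ is a multiple of $\bm\nu=\widetilde{\bm{n}}(0,0)$, condition \eqref{eq:A_1-singularity_of_d} holds with $r=1/\widetilde{\kappa}(0,0)$. At an umbilic point $\widetilde{\kappa}_1=\widetilde{\kappa}_2=\widetilde{\kappa}\neq0$, because the point is non-flat. Then \eqref{eq:equation_of_kappa} gives $\det\mathcal{H}(d_{\bm{a}})=4A(\psi)(1-r\widetilde{\kappa})^2$, so this polynomial in $r$ has a double root. Comparing with \eqref{eq:det_of_Hessian_of_d}, where $A(\psi)=\sin^2\psi>0$ and both expressions are the same quadratic in $r$, the coefficients match: $B=-2A\widetilde{\kappa}$ and $Q=A\widetilde{\kappa}^2$. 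Hence $B^2-4AQ=0$.

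\textbf{(3)$\Rightarrow$(2).} Equate the two expressions for $\det\mathcal{H}$ as polynomials in $r$. This gives $B=-A(\widetilde{\kappa}_1+\widetilde{\kappa}_2)$ and $Q=A\widetilde{\kappa}_1\widetilde{\kappa}_2$, consistent with $\widetilde{K}=Q/A$ and $\widetilde{H}=-B/(2A)$ computed earlier. Then
$$B^2-4AQ=A^2(\widetilde{\kappa}_1-\widetilde{\kappa}_2)^2,$$
and since $A\neq0$, the condition $B^2-4AQ=0$ forces $\widetilde{\kappa}_1=\widetilde{\kappa}_2$, so $\bm{0}$ is umbilic.

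The main obstacle is the non-flatness needed in (2). Read literally, (3) is also satisfied at a flat umbilic, where $B=Q=0$ and $\det\mathcal{H}\equiv 4A$ has no root. So statement (3) must be read, as in the text preceding the proposition, as saying that \eqref{eq:A_1-singularity_of_d} holds with $r$ a (double) root of \eqref{eq:det_of_Hessian_of_d}. I would make this reading explicit: such a root can exist only if $\widetilde{\kappa}\neq0$, i.e.\ $B\neq0$, and then it equals $r=-2A/B=1/\widetilde{\kappa}$. This gives $\bm{a}=r\bm\nu=\frac{1}{\widetilde{\kappa}(0,0)}\bm\nu$ and completes (2). In the same step I would also note that the Hessian then vanishes identically, since $[\widetilde{I}_{\bm{p}}]-r[\widetilde{II}_{\bm{p}}]=[\widetilde{I}_{\bm{p}}](\mathrm{Id}-r\widetilde{S})=0$ at an umbilic with $r=1/\widetilde{\kappa}$. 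Corank $2$ is exactly the $D_{\ge4}$ condition, which gives a direct check of (3)$\Rightarrow$(1) independent of Porteous.
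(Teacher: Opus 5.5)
Your proof is correct and is essentially the paper's own argument: (1)$\Leftrightarrow$(2) is Porteous's characterization applied to $\widetilde{M}$ using $\widetilde{\bm{n}}(0,0)=\bm\nu$, and (2)$\Leftrightarrow$(3) comes from matching the factored form \eqref{eq:equation_of_kappa} of $\det\mathcal{H}(d_{\bm{a}})(0,0)$ with the quadratic \eqref{eq:det_of_Hessian_of_d} in $r$, whose discriminant $B^2-4AQ=A^2(\widetilde{\kappa}_1-\widetilde{\kappa}_2)^2$ vanishes exactly at an umbilic. Your reading of (3), requiring $r$ to be the double root of \eqref{eq:det_of_Hessian_of_d}, is how the paper's preceding paragraph phrases it, and the care is warranted for two reasons: the paper's own ``if and only if'' there silently excludes the flat-umbilic case you flag, and, since $B^2-4AQ$ does not involve $r$, the literal condition does not force $r=1/\widetilde{\kappa}(0,0)$ even at a non-flat umbilic.
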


By \eqref{eq:Hessian_of_d}, if $c_{20}=\widetilde{E}-r\widetilde{l}\ne0$, then 
$$
\bm\xi_0=\left(c_{11}, -c_{20}\right) = c_{11}\bm{e}_1-c_{20}\bm{e}_2
$$
is a non-zero kernel vector of $\mathcal{H}(d_{\bm{a}})(0,0)$. 
If $c_{20}=\widetilde{E}-r\widetilde{l}=0$, then
$$
\bm\xi_0=\left(c_{02},-c_{11}\right) = c_{02}\bm{e}_1-c_{11}\bm{e}_2
$$
is a non-zero kernel vector of $\mathcal{H}(d_{\bm{a}})(0,0)$.
Therefore, at \textR{a non-umbilic point} $\bm{p}$, the function $d_{\bm{a}}$ has an $A_{\ge3}$-singularity at $(0,0)$ if and only if \eqref{eq:A_2-singularity_of_d} holds and $d_{\bm{a},3}(\bm\xi_0)=0$ (see~\cite{FH2012-1}), where $d_{\bm{a},3}$ is the cubic part of $d_{\bm{a}}$ given by
\begin{equation*}
%\label{eq:cubic_part_of_d}
d_{\bm{a},3}(u,v)=-\langle II_{\bm{p}}(\bm\xi,\bm\xi),\bm{v}_N\rangle \langle \bm\xi,\bm{v}_T\rangle + 2(f_3(u,v)\cos\phi+g_3(u,v)\sin\phi). 
\end{equation*}
Here, $\bm\xi=(u,v)\in\R^2\simeq T_{\bm{p}}M$, $\bm{v}_T \in T_{\bm{p}}M$ is the tangent component of $\bm{v}$ and $\bm{v}_N\in N_{\bm{p}}M$ is the normal component of $\bm{v}$. 
It follows that the following proposition holds:
\begin{proposition}
\label{prop:A_3-singularity_of_d}
%Suppose that $\bm{p}$ is a non-parabolic and non-umbilic point of $\widetilde{M}$ and \eqref{eq:A_2-singularity_of_d} holds. 
%Then the following conditions are equivalent: 
\textR{The following conditions are equivalent:}
\begin{enumerate}
\item
$d_{\bm{a}}$ has an $A_{\ge3}$-singularity at $(0,0)$; 
\item 
\textR{$\bm{0}$ is a non-umbilic point of $\widetilde{M}$, for some $i=1,2$}, $\bm{a}= 1/\widetilde{\kappa}_i(0,0) \widetilde{\bm{n}}(0,0) = 1/\widetilde{\kappa}_i(0,0) \bm\nu$ \textR{$(\widetilde{\kappa}_i(0,0)\ne0)$} and \textR{$\bm{0}$} is a ridge point relative to $\widetilde{\bm{v}}_i$, where $\widetilde{\bm{v}}_i$ is the principal direction with respect to $\widetilde{\kappa}_i$;
\item
\textR{\eqref{eq:A_1-singularity_of_d} holds, $r=1/\widetilde{\kappa_i}(0,0)$ $(\widetilde{\kappa}_i(0,0)\ne0)$ is a simple root of \eqref{eq:det_of_Hessian_of_d} and} 
$$
-\langle II_{\bm{p}}(\bm\xi_0,\bm\xi_0),\bm{v}_N\rangle \langle \bm\xi_0,\bm{v}_T\rangle + 2(f_3(\bm\xi_0)\cos\phi+g_3(\bm\xi_0)\sin\phi)=0
$$
holds, where $\bm\xi_0$ is a non-zero kernel vector of $\mathcal{H}(d_{\bm{a}})(0,0)$ \textR{for $r=1/\widetilde{\kappa}_i(0,0)$.}
\end{enumerate}
\end{proposition}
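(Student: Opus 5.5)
The plan is to reduce Proposition~\ref{prop:A_3-singularity_of_d} to the classical characterization of $A_{\ge3}$-singularities of distance-squared functions on a regular surface in $\R^3$, recalled above from \cite{Porteous1971}, applied to $\widetilde{M}$. The remaining work is to identify the ridge condition with the explicit cubic condition in (3), via the kernel vector $\bm\xi_0$ and the Monge normal form of Theorem~\ref{thm:Euclidean_Monge_normal_form}.

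\emph{Step 1: (1)$\Leftrightarrow$(2).} Since $\widetilde{M}$ is a regular surface in $\bm{v}^\perp\cong\R^3$ with $\widetilde{\bm n}(0,0)=\bm\nu$, the Porteous characterization applies verbatim. An $A_{\ge3}$-singularity of $d_{\bm a}$ forces an $A_{\ge2}$-singularity. By Proposition~\ref{prop:A_2-singularity_of_d}, this means $\bm 0$ is non-umbilic and $\bm a=\bm\nu/\widetilde\kappa_i(0,0)$ with $\widetilde\kappa_i(0,0)\neq0$. The extra $A_{\ge3}$ condition is exactly that $\bm 0$ is a ridge point relative to $\widetilde{\bm v}_i$. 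Here one should note that the kernel of $\mathcal H(d_{\bm a})(0,0)=2([\widetilde I_{\bm p}]-r[\widetilde{II}_{\bm p}])$ with $r=1/\widetilde\kappa_i$ is precisely the principal direction $\widetilde{\bm v}_i$.

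\emph{Step 2: (1)$\Leftrightarrow$(3).} By Proposition~\ref{prop:A_2-singularity_of_d}, the $A_{\ge2}$ part is equivalent to \eqref{eq:A_1-singularity_of_d} holding with $r=1/\widetilde\kappa_i(0,0)$ a simple root of \eqref{eq:det_of_Hessian_of_d}. Simplicity of the root excludes umbilics, by the discussion leading to \eqref{eq:umbilic}. In that case the Hessian has rank exactly one, and $\bm\xi_0$ as defined spans its kernel: the two cases $c_{20}\neq0$ and $c_{20}=0$ are handled separately, using rank one to guarantee $\bm\xi_0\ne0$. The standard criterion for functions of two variables (cf.\ \cite{FH2012-1}) says that a corank-one critical point is $A_{\ge3}$ iff the cubic part vanishes on the kernel, i.e.\ $d_{\bm a,3}(\bm\xi_0)=0$. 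This criterion is valid because the quadratic part is a square of a linear form whose kernel is $\bm\xi_0$.

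\emph{Step 3: computing $d_{\bm a,3}$.} This is the main obstacle. Write $\bm v=\bm v_T+\bm v_N$. The map $\pi_{\bm v}(u,v)-\bm a$ has squared norm $\lVert X-\langle X,\bm v\rangle\bm v\rVert^2-2r\langle X,\bm\nu\rangle+r^2$, using $\bm\nu\perp\bm v$. Hence
\[
d_{\bm a}=\lVert X\rVert^2-\langle X,\bm v\rangle^2-2r\langle X,\bm\nu\rangle+r^2.
\]
Insert $X=(u,v,f,g)$ and expand to third order. The term $\lVert X\rVert^2$ has no cubic part, since $f,g$ start at order two. The term $\langle X,\bm v\rangle^2$ contributes cubically $2\langle\bm\xi,\bm v_T\rangle\cdot\tfrac12\langle II_{\bm p}(\bm\xi,\bm\xi),\bm v_N\rangle$. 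The term $-2r\langle X,\bm\nu\rangle$ contributes $-2r(f_3\cos\phi+g_3\sin\phi)$. One then normalizes by the nonzero factor $-r$, or equivalently rescales, to obtain the stated form. I expect the bookkeeping of signs and of the factor $r$ here to be the delicate point. In particular, the displayed formula should be checked against this expansion, possibly after dividing by $r\neq0$ and absorbing the sign conventions for $\bm v_N$ and $\bm v_T$. Finally, combining Steps 1--3 gives the equivalence of all three conditions.
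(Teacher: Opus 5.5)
Your route is the paper's route. (1)$\Leftrightarrow$(2) is Porteous's characterization applied to $\widetilde{M}$. (1)$\Leftrightarrow$(3) is the corank-one criterion of \cite{FH2012-1}: $A_{\ge3}$ holds iff the cubic part vanishes on the kernel of the Hessian, and $\bm\xi_0$ spans that kernel. Steps~1 and~2 are sound, including your observations that simplicity of the root is equivalent to non-umbilicity and that rank one guarantees $\bm\xi_0\neq\bm 0$ in both cases.

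\textbf{The gap is the last sentence of Step~3.} Your expansion of the cubic part is correct:
\[
d_{\bm a,3}(\bm\xi)=-\langle II_{\bm p}(\bm\xi,\bm\xi),\bm v_N\rangle\langle\bm\xi,\bm v_T\rangle-2r\bigl(f_3(\bm\xi)\cos\phi+g_3(\bm\xi)\sin\phi\bigr).
\]
However, it cannot be normalized to the displayed condition in (3). Dividing by $-r$ rescales both terms and gives
\[
\widetilde\kappa_i(0,0)\langle II_{\bm p}(\bm\xi_0,\bm\xi_0),\bm v_N\rangle\langle\bm\xi_0,\bm v_T\rangle+2\bigl(f_3(\bm\xi_0)\cos\phi+g_3(\bm\xi_0)\sin\phi\bigr)=0.
\]
This agrees with (3) only when $\widetilde\kappa_i(0,0)=-1$ or the mixed term vanishes at $\bm\xi_0$. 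No choice of sign conventions can absorb a factor $-r$ that multiplies only one of the two terms. Moreover, $\bm v_T$ and $\bm v_N$ are determined by $\bm v$, and their product is even in $\bm v$.

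The discrepancy lies in the statement, not in your computation. The paper simply asserts $d_{\bm a,3}$ with $+2(f_3\cos\phi+g_3\sin\phi)$. This drops the factor $-r$ coming from $-2r\langle X,\bm\nu\rangle$ in $d_{\bm a}=\lVert X\rVert^2-\langle X,\bm v\rangle^2-2r\langle X,\bm\nu\rangle+r^2$. With that omission, (3) as printed is not equivalent to (1).

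For a concrete counterexample, take $\theta=\phi=0$, $\psi=\pi/4$, $h_1=h_2=1$, $\lambda=\mu=0$, $f_3=\tfrac16a_{30}u^3$ and $g_3=0$. Then $\pi_{\bm v}=(-v,\tfrac{1}{\sqrt2}(u-g),f)$, with $\widetilde\kappa_1(0,0)=2$ and $\widetilde\kappa_2(0,0)=1$. For $r=\tfrac12=1/\widetilde\kappa_1(0,0)$, which is a simple root,
\[
d_{\bm a}=\tfrac14+\tfrac12v^2-\tfrac12u(u^2+v^2)-\tfrac16a_{30}u^3+O(u,v)^4 .
\]
So $\bm\xi_0=\bm e_1$, and $d_{\bm a}$ is $A_{\ge3}$ iff $a_{30}=-3$. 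The printed condition instead reads $-\tfrac12+\tfrac13a_{30}=0$, i.e.\ $a_{30}=\tfrac32$.

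So do not try to force your expansion into the printed form. Instead, state (3) with the corrected cubic term:
\[
-\langle II_{\bm p}(\bm\xi_0,\bm\xi_0),\bm v_N\rangle\langle\bm\xi_0,\bm v_T\rangle-\tfrac{2}{\widetilde\kappa_i(0,0)}\bigl(f_3(\bm\xi_0)\cos\phi+g_3(\bm\xi_0)\sin\phi\bigr)=0 .
\]
With that correction, your Steps~1--3 give a complete proof.
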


%%%%%%%%%%%%%%%%%%%%%%%%%%%%%%%%%%%%

\subsection{The geometry of projected singular surfaces}
\label{sec:projected_singular_surface}

Next, we consider the case $\bm{v}\in T_{\bm{p}}M$, which is equivalent to $\psi=0$. 
Note that 
$$
\bm{v}=\bm{w},\quad \bm\eta_1=-\bm{w}^{\perp}, \quad \bm\eta_2=-\bm\nu^{\perp},\quad \bm\eta_3=\bm\nu.
$$
In this case, the projected surface $\widetilde{M}$ is a singular surface at $\bm{0}=\pi_{\bm{v}}(0,0)$. 
Indeed, the Jacobian matrix of $\pi_{\bm{v}}$ at $(0,0)$ is given by
$$
\begin{pmatrix}
\sin\theta & -\cos\theta \\
0 & 0 \\
0 & 0
\end{pmatrix}.
$$

Setting 
$$
x=u\sin\theta-v\cos\theta,\quad y=u\cos\theta+v\sin\theta,
$$
we obtain that 
$$
\pi_{\bm{v}}(x,y)=(x, \widetilde{f}(x,y), \widetilde{g}(x,y))
$$
which satisfies 
$$
d(\pi_{\bm{v}})_{\bm{0}}(\partial_x) \ne0, \quad d(\pi_{\bm{v}})_{\bm{0}}(\partial_y)=0,
$$
where
\begin{equation}
\label{eq:2-jet_of_projected_singular_surface}
\begin{split}
\widetilde{f} &= \dfrac12\{((h_2-\mu\sin2\theta)\cos\phi+(-h_1+\lambda\cos2\theta)\sin\phi)x^2\\
& \quad -2(\mu\cos2\theta\cos\phi+\lambda\sin2\theta\sin\phi)xy \\
&\quad +((h_2+\mu\sin2\theta)\cos\phi-(h_1+\lambda\cos2\theta)\sin\phi)y^2\}+O(x,y)^3, \\
\widetilde{g} & = \dfrac12\{((h_1-\lambda\cos2\theta)\cos\phi+(h_2-\mu\sin2\theta)\sin\phi) x^2\\
& \quad +2(\lambda\sin2\theta\cos\phi-\mu\cos2\theta\sin\phi)xy\\
& \quad +((h_1+\lambda\cos2\theta)\cos\phi+(h_2+\mu\sin2\theta)\sin\phi)\}y^2 + O(x,y)^3.
\end{split}
\end{equation}

The group $\mathcal{A}$ of pairs of diffeomorphism-germs of the source and target acts on the space of map-germs by $\mathcal{A}$-equivalence.
The space of 2-jets of map-germs $(\R^2,\bm{0})\to(\R^3,\bm{0})$ is denoted by $J^2(2,3)$ and the subset of 2-jets in $J^2(2,3)$ of corank 1 is denoted by $\Sigma^1J^2(2,3)$. 
The group $\mathcal{A}$ acts on $J^2(2,3)$ in the obvious way, and this action factors via the projection $\mathcal{A} \to \mathcal{A}^k$, where $\mathcal{A}^k$ denotes the group of $k$-jets of elements of $\mathcal{A}$. 
The group $\mathcal{A}^k$ acts smoothly on $J^k(2,3)$.
In \cite{Mond1985}, Mond classified all corank-one map-germs $(\R^2,\bm{0})\to(\R^3,\bm{0})$ according to their 2-jet under the action of $\mathcal{A}^2$.
\begin{proposition}[\cite{Mond1985}]
There are four $\mathcal{A}^2$-orbits in $\Sigma^1J^2(2,3):$
$$
(x,y^2,xy),\quad (x,y^2,0),\quad (x,xy,0),\quad \mathrm{and}\quad (x,0,0).
$$
\end{proposition}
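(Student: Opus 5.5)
The plan is to normalize the first component, reduce the remaining 2-jet to a pair of binary quadratic forms, and classify that pair by its rank. Let $j^2F=(x,Q_1(x,y),Q_2(x,y))$ with $F$ of corank one. A linear change of source coordinates makes $dF_{\bm 0}(\partial_y)=0$ and $dF_{\bm 0}(\partial_x)\ne 0$. A linear change of target coordinates then makes the first component exactly $x$ at the level of 2-jets, with the other two components purely quadratic. Next I apply the target change $(X,Y,Z)\mapsto(X,Y-\alpha X^2-\beta_0 XY\ldots)$. More precisely, I subtract multiples of $X^2$ from $Y$ and $Z$, which removes every $x^2$ term from $Q_1$ and $Q_2$. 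What remains has the form $Q_i=b_ixy+c_iy^2$. So the 2-jet is determined by the $2\times2$ matrix $N=\begin{pmatrix} b_1&c_1\\ b_2&c_2\end{pmatrix}$, modulo the remaining admissible actions.

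Those remaining actions are as follows. First, $GL(2)$ acts on the rows through linear target changes in $(Y,Z)$. Second, there are source changes $y\mapsto sy+tx$ with $s\ne0$, and $x\mapsto x+\ldots$ at higher order. The substitution $y\mapsto sy+tx$ sends $xy\mapsto s\,xy+t\,x^2$ and $y^2\mapsto s^2y^2+2st\,xy+t^2x^2$. After again killing the $x^2$ terms by target changes, this acts on each row $(b,c)$ by $(b,c)\mapsto(sb+2stc,\;s^2c)$. Scaling $x$ and the first target coordinate gives a further independent scaling of the $xy$-column.

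The classification is then by $\operatorname{rank}N$. If $\operatorname{rank}N=2$, row operations bring $N$ to the identity, which gives $(x,xy,y^2)$, i.e.\ $(x,y^2,xy)$ after swapping. If $\operatorname{rank}N=0$, the germ is $(x,0,0)$. If $\operatorname{rank}N=1$, row operations reduce to a single nonzero row $(b,c)$. If $c\ne0$, I choose $t=-b/(2c)$ to kill $b$, which gives $(x,y^2,0)$. If $c=0$, the row is $(b,0)$ with $b\ne0$, which gives $(x,xy,0)$.

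These four orbits are distinct. First, $\operatorname{rank}N$ is invariant under all the actions above. Second, in the rank-one case the property ``$c=0$'' is invariant, because $c\mapsto s^2c$ and row operations preserve a rank-one row space up to scale. Intrinsically, $c\ne0$ says that the image of the second-order map restricted to $\ker dF$ is nonzero.

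The main care point is verifying that the list of admissible 2-jet actions is complete. Any diffeomorphism of the target whose 2-jet involves $Y$ or $Z$ quadratically contributes only terms of order at least $4$, since $Y$ and $Z$ are themselves quadratic. Likewise, higher-order source terms do not affect these coefficients after the normalizations. Checking this justifies that the invariants above are genuine $\mathcal{A}^2$-invariants.
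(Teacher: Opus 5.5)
Your argument is correct and is the standard normal-form reduction. The paper does not prove this statement; it quotes it from Mond, so there is no competing route to compare.

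The closest related content is the list of criteria (a)--(d) that the paper asserts ``by direct calculation'' in the proof of the next proposition. Your invariants are exactly those criteria. In the paper's notation $N=\begin{pmatrix} a_{11} & a_{02}/2\\ b_{11} & b_{02}/2\end{pmatrix}$, so $\operatorname{rank}N=2$ means $a_{11}b_{02}-a_{02}b_{11}\neq0$, and vanishing of your $y^2$-column means $a_{02}=b_{02}=0$. Your proposal therefore also supplies the calculation the paper leaves out.

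Two small points need tidying.

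First, a linear target change cannot make the first component exactly $x$. You need the source change $\tilde x=x+q_0(x,y)$, which is harmless because it alters $Q_1,Q_2$ only at order three.

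Second, the completeness step is cleanest if stated directly. Any element of $\mathcal{A}^2$ taking one pre-normal form $(x,\,b_1xy+c_1y^2,\,b_2xy+c_2y^2)$ to another must have:
\begin{itemize}
\item source linear part $(x,y)\mapsto(x/\ell,\,a_{21}x+a_{22}y)$;
\item target linear part with first column $(\ell,0,0)^T$.
\end{itemize}
Then source quadratic terms affect only the first component. Target quadratic terms other than $X^2$ contribute only at order at least $3$; note that $XY$ and $XZ$ give order $3$, not $4$ as you wrote. Hence $N\mapsto PNR$ with $P\in GL(2,\R)$ and $R=\begin{pmatrix} a_{22}/\ell & 0\\ 2a_{21}a_{22} & a_{22}^2\end{pmatrix}$. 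Invariance of $\operatorname{rank}N$ and of ``the second column of $N$ is zero'' is then immediate, and the separate rank-one argument is unnecessary.
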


We now characterize the four $\mathcal{A}^2$-orbits of the $2$-jet of
$\pi_{\bm{v}}$ in terms of the geometry of $M$ (cf. \cite{BROS2019}).
\begin{proposition}
The 2-jet $j^2\pi_{\bm{v}}(\bm{0})$ of the local parameterization $\pi_{\bm{v}}$ of the projected singular surface $\widetilde{M}$ is $\mathcal{A}$-equivalent to: 
\begin{enumerate}
\item
$(x,y^2,xy)$ if and only if the projection direction $\bm{v}$ is not parallel to the asymptotic direction of $M$ at $\bm{p}$;
\item
$(x,y^2,0)$ if and only if $\bm{v}$ is parallel to the asymptotic direction $\bm{w}(\theta_0)$ of $M$ at $\bm{p}$ and $\bm{0}_{\bm{p}}\ne \mathcal{E}_{\bm{p}}(\theta_0)$; 
\item
$(x,xy,0)$ if and only if $\bm{v}$ is parallel to the asymptotic direction $\bm{w}(\theta_0)$ at $\bm{p}$, $\bm{0}_{\bm{p}}=\mathcal{E}_{\bm{p}}(\theta_0)$ and $\mathcal{E}_{\bm{p}}$ is a non-degenerate ellipse;
\item
$(x,0,0)$ if and only if $\bm{v}$ is parallel to the asymptotic direction $\bm{w}(\theta_0)$ at $\bm{p}$, $\bm{0}_{\bm{p}}=\mathcal{E}_{\bm{p}}(\theta_0)$ and $\mathcal{E}_{\bm{p}}$ degenerates to a line segment or a point.
\end{enumerate}
\end{proposition}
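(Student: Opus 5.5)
The plan is to read off the $\mathcal{A}^2$-orbit directly from the quadratic part of $(\widetilde f,\widetilde g)$ in \eqref{eq:2-jet_of_projected_singular_surface}, using the standard invariant description of Mond's four orbits. For a corank-one 2-jet $(x, \widetilde f_2, \widetilde g_2)$, target changes of the form $(X,Y,Z)\mapsto (X, Y-\alpha X^2, Z-\beta X^2)$ remove the $x^2$-terms. The orbit is then determined by the pair of linear forms
\[
(\widetilde f_{xy},\widetilde g_{xy})\,x + (\widetilde f_{yy},\widetilde g_{yy})\,y/2 ,
\]
that is, by the two vectors $\bm{b}=(\widetilde f_{xy},\widetilde g_{xy})$ and $\bm{c}=(\widetilde f_{yy},\widetilde g_{yy})$ in the normal plane of the image. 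There are four cases:
\begin{itemize}
\item $\bm{b},\bm{c}$ linearly independent gives $(x,y^2,xy)$;
\item $\bm{c}\neq 0$ and $\bm{b}\parallel\bm{c}$ gives $(x,y^2,0)$, since after a source change $y\mapsto y-kx$ the $xy$ term is absorbed into $x^2$-terms;
\item $\bm{c}=0$ and $\bm{b}\ne 0$ gives $(x,xy,0)$;
\item $\bm{b}=\bm{c}=0$ gives $(x,0,0)$.
\end{itemize}
I would state this reduction as a first lemma, proving it with elementary linear changes in source and target.

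The second step is to interpret $\bm{b}$ and $\bm{c}$ geometrically rather than through the explicit trigonometric formulas. Since $\psi=0$, the projection kills $\bm{w}=\bm{v}$ and maps $N_{\bm{p}}M$ isometrically onto $\mathrm{span}\{\bm\eta_2,\bm\eta_3\}$. The coordinate $y$ runs along $\bm{w}$ and $x$ along $-\bm{w}^\perp$. Hence, up to sign and this rotation,
\[
\bm{c}=II_{\bm p}(\bm{w},\bm{w})=\mathcal{E}_{\bm p}(\theta),\qquad \bm{b}=\pm II_{\bm p}(\bm{w},\bm{w}^\perp)=\pm\frac12\,\frac{\partial}{\partial\theta}\mathcal{E}_{\bm p}(\theta).
\]
This is consistent with \eqref{eq:2-jet_of_projected_singular_surface}, because $\mathcal{E}_{\bm p}(\theta)=(h_1+\lambda\cos2\theta)\bm\nu_1+(h_2+\mu\sin2\theta)\bm\nu_2$ by \eqref{eq:curvature_ellipse_normal_form}, and its derivative is $2(-\lambda\sin2\theta\,\bm\nu_1+\mu\cos2\theta\,\bm\nu_2)$. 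I would verify the coefficients only up to a common invertible rotation, which is all the classification needs.

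The final step is to translate the four cases using \eqref{eq:kappa_and_kappa_theta} and the definition of asymptotic direction.
\begin{enumerate}
\item $\bm{b},\bm{c}$ independent means $\kappa_n$ and $\partial_\theta\kappa_n$ are not parallel, i.e.\ $\bm{v}$ is not asymptotic.
\item $\bm{c}\ne0$ with $\bm{b}\parallel\bm{c}$ means $\bm{v}=\bm{w}(\theta_0)$ is asymptotic and $\mathcal{E}_{\bm p}(\theta_0)\neq\bm{0}_{\bm p}$.
\item $\bm{c}=0$ means $\mathcal{E}_{\bm p}(\theta_0)=\bm 0_{\bm p}$, which automatically makes the direction asymptotic. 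The remaining point is to show that $\bm{b}\ne0$ exactly when the ellipse is non-degenerate. The vector $\partial_\theta\mathcal{E}_{\bm p}$ is $2(-\lambda\sin2\theta,\mu\cos2\theta)$. If $\lambda\mu\ne0$ it never vanishes. If the ellipse is a segment or a point, then $\bm{0}_{\bm p}=\mathcal{E}_{\bm p}(\theta_0)$ lying on the ellipse with nonzero velocity would require $\bm 0_{\bm p}$ to be an interior point of the segment.
\end{enumerate}

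This last point is the main obstacle. If the segment is degenerate with $\lambda>0$ and $\mu=0$, and $\bm0_{\bm p}$ lies strictly inside it, then $\bm b\ne0$ and we would get $(x,xy,0)$, not $(x,0,0)$. So I would examine case (4) carefully. My first attempt would be to check whether, for a radial segment with $\bm 0_{\bm p}$ an interior point (a real-type inflection), the statement's item (3) or (4) is intended. I expect that the proof must either interpret "degenerates" so that $\bm0_{\bm p}$ is an endpoint, where $\partial_\theta\mathcal E=0$, or treat the interior case as belonging to (3). I would settle this by the explicit computation with $\mu=0$ and $h_2=0$, and phrase the equivalences accordingly.
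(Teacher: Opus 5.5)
Your reduction lemma is the paper's criteria (a)--(d) in invariant form, since $a_{11}b_{02}-a_{02}b_{11}=\det(\bm b,\bm c)$. Your identification $\bm c=\mathcal{E}_{\bm p}(\theta)$, $\bm b=\pm\frac12\partial_\theta\mathcal{E}_{\bm p}(\theta)$ is also correct, up to the isometry of $N_{\bm p}M$ onto $\mathrm{span}\{\bm\eta_2,\bm\eta_3\}$. Reading items (1) and (2) off \eqref{eq:kappa_and_kappa_theta} is cleaner than the paper's appeal to the binary differential equation and the explicit trigonometric coefficients, but it is the same argument.

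The obstacle you flag is real. Items (3) and (4) are false as stated, and the paper's proof has a gap at exactly this point.

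\emph{Where the paper's proof fails.} From $a_{02}=b_{02}=0$ and $\mu\cos2\theta_0=\lambda\sin2\theta_0=0$ the paper correctly deduces $\lambda\mu=0$. In fact it then follows that $\bm 0_{\bm p}$ is an endpoint of the segment, or that $\mathcal{E}_{\bm p}=\{\bm 0_{\bm p}\}$. The paper then asserts that $\mathcal{E}_{\bm p}$ is degenerate \emph{if and only if} $a_{11}=b_{11}=0$. The implication ``degenerate $\Rightarrow a_{11}=b_{11}=0$'' is never proved, and it is false.

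\emph{Counterexample.} Take $h_1=h_2=\mu=0$ and $\lambda>0$. This is a real-type inflection point with $\mathcal{E}_{\bm p}=[-\lambda,\lambda]\bm\nu_1$. Take $\theta_0=\pi/4$. Every direction is asymptotic and $\mathcal{E}_{\bm p}(\theta_0)=\bm 0_{\bm p}$. Yet $u^2-v^2=2xy$, so $f=\lambda xy+O(x,y)^3$ and $g=O(x,y)^3$. The quadratic part of $\pi_{\bm v}$ is therefore $\lambda xy$ times a nonzero vector of $\mathrm{span}\{\bm\eta_2,\bm\eta_3\}$, and $j^2\pi_{\bm v}\sim(x,xy,0)$. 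This contradicts (4) and the ``only if'' part of (3).

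\emph{What to do.} Do not reinterpret ``degenerates'' so that $\bm 0_{\bm p}$ must be an endpoint; the statement says nothing of the kind. Instead, commit to the corrected statement that your velocity argument actually proves. Suppose $\bm v=\bm w(\theta_0)$ and $\mathcal{E}_{\bm p}(\theta_0)=\bm 0_{\bm p}$. Then the 2-jet is $(x,xy,0)$ iff $\partial_\theta\mathcal{E}_{\bm p}(\theta_0)\neq\bm 0$. This means $\mathcal{E}_{\bm p}$ is non-degenerate, or is a segment with $\bm 0_{\bm p}$ in its relative interior (real type). The 2-jet is $(x,0,0)$ iff $\bm 0_{\bm p}$ is an endpoint of the segment (flat type) or $\mathcal{E}_{\bm p}=\{\bm 0_{\bm p}\}$.

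This follows because on a segment $\bm H+\lambda\cos2\theta\,\bm\nu_1$ (resp.\ $\bm H+\mu\sin2\theta\,\bm\nu_2$) the velocity $-2\lambda\sin2\theta\,\bm\nu_1$ (resp.\ $2\mu\cos2\theta\,\bm\nu_2$) vanishes exactly at the endpoints. Items (1), (2) and your lemma need no change.
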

\begin{proof}
If the 2-jet of a map-germ $(\R^2,\bm{0})\to(\R^3,\bm{0})$ is given by
$$
\left(x,\frac12(a_{20}x^2+2a_{11}xy+a_{02}y^2),\frac12(b_{20}x^2+2b_{11}xy+b_{02}y^2)\right),
$$
then, by direct calculation, we obtain the following conditions.
\begin{center}
\begin{tabular}{lcl}
(a) the 2-jet is $\mathcal{A}$-equivalent to $(x,y^2,xy)$ & $\Leftrightarrow$ & $a_{11}b_{02}-a_{02}b_{11}\ne0$; \\
(b) the 2-jet is $\mathcal{A}$-equivalent to $(x,y^2,0)$ & $\Leftrightarrow$ & $a_{11}b_{02}-a_{02}b_{11}=0$, and $(a_{02},b_{02})\ne(0,0)$; \\
(c) the 2-jet is $\mathcal{A}$-equivalent to $(x,xy,0)$ & $\Leftrightarrow$ & $a_{11}b_{02}-a_{02}b_{11}=a_{02}=b_{02}=0$\\
&& and $(a_{11},b_{11})\ne(0,0)$; \\
(d) the 2-jet is $\mathcal{A}$-equivalent to $(x,0,0)$ & $\Leftrightarrow$ & $a_{11}b_{02}-a_{02}b_{11}=a_{02}=b_{02}=a_{11}=b_{11}=0$.
\end{tabular}
\end{center}

By Corollary \ref{cor:BDE_of_asymptotic} and \eqref{eq:2-jet_of_projected_singular_surface}, $a_{11}b_{02}-a_{02}b_{11}\ne0$ holds if and only if the projection direction $\bm{v}$ is not parallel to the asymptotic direction at $\bm{p}$. 
Therefore, (1) follows from (a). 

Assume that $a_{11}b_{02}-a_{02}b_{11}=0$. 
Then we have an asymptotic direction $\bm{w}=\bm{w}(\theta_0)$, and thus by \eqref{eq:curvature_ellipse_normal_form}, we have 
$$
II_{\bm{p}}(\bm{w}(\theta_0), \bm{w}(\theta_0))=\mathcal{E}_{\bm{p}}(\theta_0)=(h_1+\lambda \cos2\theta_0)\bm\nu_1+(h_2+\mu\sin2\theta_0)\bm\nu_2 \in \mathcal{E}_{\bm{p}}.
$$
It follows from \eqref{eq:2-jet_of_projected_singular_surface} that
$$
2a_{02}=\langle II_{\bm{p}}(\bm{w}(\theta_0),\bm{w}(\theta_0)),\bm\nu^\perp\rangle,
\qquad
2b_{02}=\langle II_{\bm{p}}(\bm{w}(\theta_0),\bm{w}(\theta_0)),\bm\nu\rangle.
$$
Since $\{\bm\nu,\bm\nu^{\perp}\}$ is an orthonormal basis of $N_{\bm{p}}M$, we have $a_{02}= b_{02}=0$ if and only if $II_{\bm{p}}(\bm{w}(\theta_0),\bm{w}(\theta_0))=\bm{0}$, which is equivalent to $\bm{0}_{\bm{p}}=\mathcal{E}_{\bm{p}}(\theta_0)$. 
Hence, (2) follows from (b). 

Suppose that $a_{11}b_{02}-a_{02}b_{11}=a_{02}=b_{02}=0$. 
Then we also have an asymptotic direction $\bm{w}=\bm{w}(\theta_0)$ and $\bm{0}_{\bm{p}}=\mathcal{E}_{\bm{p}}(\theta_0)$. 
It follows from \eqref{eq:2-jet_of_projected_singular_surface} that $a_{11}=b_{11}=0$ if and only if 
\begin{equation*}
\begin{pmatrix}
-\cos\phi & -\sin\phi \\
-\sin\phi & \cos\phi
\end{pmatrix}
\begin{pmatrix}
\cos2\theta_0 & 0 \\
0 & \sin2\theta_0
\end{pmatrix}
\begin{pmatrix}
\mu \\
\lambda
\end{pmatrix}
=\begin{pmatrix}
0\\
0
\end{pmatrix}.
\end{equation*}
Since the determinant of the first matrix of the left hand side of the above equation is equal to $-1$, we then obtain
\begin{equation*}
\mu\cos2\theta_0 = 0,\quad \lambda\sin2\theta_0=0.
\end{equation*}
Combining these equations with $a_{02}=b_{02}=0$, we obtain either $\lambda=0$, $\mu\ne0$, with $\mathcal{E}_{\bm{p}}$ a line segment lying on the $\bm\nu_2$-axis; or $\lambda\ne0$, $\mu=0$, with $\mathcal {E}_{\bm{p}}$ a line segment lying on the $\bm\nu_1$-axis; or \(\lambda=\mu=0\), with \(\mathcal E_{\bm p}\) a point. 
Hence, $\mathcal{E}_{\bm{p}}$ is degenerate if and only if $a_{11}=b_{11}=0$.
Therefore, (3) and (4) follow from (c) and (d), respectively. 
\end{proof}
%%%%%%%%%%%%%%%%%%%%%%%%%%%%%%%%%%%%%%

\section*{Acknowledgments}
The author would like to thank Y.~Kabata for providing the inspiration for Theorem~\ref{thm:Euclidean_Monge_normal_form}, and T.~Fukui, R.~Oset Sinha and F.~Tari for their comments. 

%%%%%%%%%%%%%%%%%%%%%%%%%%%%%%%%%%%


\begin{thebibliography}{99}

\bibitem{Arnold1986}
V.~I.~Arnol'd, S.~M.~Guse{\u\i}n-Zade and A.~N.~Varchenko,
Singularities of differentiable maps.\ Vol.~I.
\href{https://doi.org/10.1007/978-0-8176-8340-5}
{Monogr.\ Math. 82, (Birkh$\ddot{\text{a}}$user Boston, Inc., 1986.)}

\bibitem{Asperti1980}
A.~C. Asperti,
Immersions of surfaces into 4-dimensional spaces with nonzero normal curvature, 
\href{https://doi.org/10.1007/BF01789417}
{Ann. Mat. Pura Appl. \textbf{125} (1980), 313--328.}

\bibitem{Asperti1984}
A.~C. Asperti,
Minimal surfaces with constant normal curvature, 
\href{https://doi.org/10.2969/jmsj/03630375}
{J. Math. Soc. Jpn. \textbf{36} (1984), no.~3, 375--386.}

\bibitem{AFR1982}
A.~C. Asperti, D.~Ferus and L.~Rodriguez, 
Surfaces with non-zero normal curvature tensor, 
Atti Accad. Naz. Lincei, Rend. Cl. Sci. Fis. Mat. Nat. \textbf{73} (1982), 109--115.

\bibitem{BSB2012}
P. Bayard and F. S\'anchez-Bringas,
Geometric invariants of surfaces in $\R^4$, 
\href{https://doi.org/10.1016/j.topol.2011.09.014}
{Topology Appl. \textbf{159} (2012), 405--413.}

\bibitem{BROS2019}
P.~Benedini Riul and R.~Oset Sinha, 
A relation between the curvature ellipse and the curvature parabola, 
\href{https://doi.org/10.1515/advgeom-2019-0002}
{Adv. Geom. \textbf{19} (2019) 389--399.}

\bibitem{BN1998}
J.~W.~Bruce and A.~C.~Nogueira,
Surfaces in $\R^4$ and duality, 
\href{https://doi.org/10.1093/qmathj/49.4.433}
{Quart. J. Math. Oxford \textbf{49} (1998), 433-443. }

\bibitem{BT2002}
J.~W.~Bruce and F.~Tari,
Families of surfaces in $\R^4$,
\href{https://doi.org/10.1017/S0013091500000213}
{Proc. Edinb. Math. Soc. \textbf{45} (2002), 181--203.}

\bibitem{David1983}
J.~M.~S.~David, 
Projection-generic curves,
\href{https://doi.org/10.1112/jlms/s2-27.3.552}
{J. London Math. Soc. (2) \textbf{27} (1983), 552--562. }

%\bibitem{DSilva2021}
%J.~L.~Deolindo-Silva, 
%Cross-ratio invariants for surfaces in 4-Space, 
%Bull. Braz. Math. Soc. New Series \textbf{52} (2021), 591--612. 

%\bibitem{DSilva2024}
%J.~L.~Deolindo-Silva, 
%On the differential geometry of smooth ruled surfaces in 4-space, 
%Math. Nachr. \textbf{297} (2024), 4689--4704.

%\bibitem{DSK2019}
%J.~L.~Deolindo-Silva and Y.~Kabata, 
%Projective classification of jets of surfaces in 4-space, 
%Hirosima Math. j. \textbf{49} (2019), 35--46.

%\bibitem{Fomenko1979}
%V.~T.~Fomenko,
%Some properties of two-dimensional surfaces with zero normal torsion in $\mathbb{E}^4$,
%Math. USSR. Sbornik \textbf{35} (1979), 251--265.

\bibitem{FH2012-1}
T.~Fukui and M.~Hasegawa,
Singularities of parallel surfaces, 
\href{https://doi.org/10.2748/tmj/1347369369}
{Tohoku Math. J. \textbf{64} (2012), 387--408.}

\bibitem{GMRFR2000}
R. Garcia, D. K. H. Mochida, M. C. Romero-Fuster and M. A. S. Ruas,
Inflection points and topology of surfaces in 4-space, 
\href{https://doi.org/10.2307/118196}
{Trans. Am. Math. Soc. \textbf{352} (2000), 3029--3043.}

\bibitem{GS2000}
R.~Garcia and J.~Sotomayor, 
Lines of axial curvature on surfaces immersed in $\R^4$, 
\href{https://doi.org/10.1016/S0926-2245(00)00015-2}
{Differ. Geom. Appl. \textbf{12} (2000), 253--269.}

\bibitem{GR1983}
I.~V. Guadalupe and L. Rodriguez,
Normal curvature of surfaces in space forms, 
\href{https://doi.org/10.2140/pjm.1983.106.95}
{Pac. J. Math. \textbf{106} (1983), no.~1, 95--103.}

\bibitem{GGTG1997}
C.~Gutierrez, I.~Guadalupe, R.~Tribuzy and V.~Gu\'i\~nez,
Lines of curvature on surfaces immersed in $\R^4$, 
\href{https://doi.org/10.1007/BF01233393}
{Bol. Soc. Bras. Mat. \textbf{28} (1997), 233-251.}

\bibitem{IFRT2015}
S.~Izumiya, M.~C.~Romero Fuster, M.~A.~S.~Ruas and F.~Tari, Differential Geometry from a Singularity Theory Viewpoint,
\href{https://doi.org/10.1142/9108}
{World Scientific, 2015.}

\bibitem{Little1969}
J.~A. Little,
On singularities of submanifolds of higher dimensional Euclidean spaces, 
\href{https://doi.org/10.1007/BF02411172}
{Ann. Mat. Pura Appl. \textbf{83} (1969), 261--335.}

\bibitem{Maccori2004}
A.~J.~Maccori, 
Superf\'icies Riemannianas no $\R^4$, 
Master's thesis, University of S\~ao Paulo (2004).

\bibitem{MRFR1995}
D.~K.~H.~Mochida, M.~D.~C.~Romero Fuster and M.~A.~S.~Ruas,
The geometry of surfaces in 4-space from a contact viewpoint, 
\href{https://doi.org/10.1007/BF01265348}
{Geom. Dedicata \textbf{54} (1995), 323--332.}

%\bibitem{Montaldi1986-1}
%J.~A.~Montaldi,
%On contact between submanifolds,
%Michigan Math. J. {\bf 33} (1986), 81--85.

\bibitem{Mond1982}
D.~M.~Q.~Mond, 
Classification of certain singularities and applications to differential geometry,
PhD thesis, University of Liverpool (1982).

\bibitem{Mond1985}
D.~M.~Q.~Mond, 
On the classification of germs of maps from $\R^2$ to $\R^3$, 
\href{https://doi.org/10.1112/plms/s3-50.2.333}
{Proc. Lond. Math. Soc. \textbf{50} (1985), 333--369.}

\bibitem{MW1916}
C.~L.~.E.~Moore and E.~B.~Wilson, 
Differential geometry of two dimensional surfaces in hyperspace, 
\href{https://www.jstor.org/stable/20025687}
{Proc. Am. Acad. Arts Sci. \textbf{52} (1916), 269--368.}

\bibitem{NBT2007}
J.~J.~Nu\~no-Ballesteros and F.~Tari, 
Surfaces in $\R^4$ and their projections to 3-spaces, 
\href{https://doi.org/10.1017/S0308210506000758}
{Proc. R. Soc. Edinburgh, Sect. A \textbf{137} (2007), 1313--1328.}

\bibitem{OST2015}
R.~Oset Sinha and F.~Tari, 
Projections of surfaces in $\R^4$ to $\R^3$ and the geometry of their singular images, 
\href{https://doi.org/10.4171/RMI/825}
{Rev. Mat. Iberoam. \textbf{31} (2015), 33--50.}

\bibitem{Porteous1971}
I.~R.~Porteous, 
The normal singularities of a submanifold, 
\href{https://doi.org/10.4310/jdg/1214430015}
{J. Differ. Geom. \textbf{5} (1971), 543--564. }

\bibitem{RFSC2013}
M.~C.~Romero-Fuster and E.~Sanabria-Codesal, 
Conformal invariants and spherical contacts of surfaces in $\R^4$, 
\href{https://doi.org/10.1007/s13163-011-0086-3}
{Rev. Mat. Complut. \textbf{26} (2013), 215--240. }

\bibitem{Tari2009}
F.~Tari, 
Self-adjoint operators on surfaces in $\R^n$, 
\href{https://doi.org/10.1016/j.difgeo.2008.10.010}
{Differ. Geom. Appl. \textbf{27} (2009), 296--306. }

\bibitem{Wong1946}
Y.-C. Wong,
Contributions to the theory of surfaces in 4-space of constant curvature, 
\href{https://doi.org/10.2307/1990268}
{Trans. Amer. Math. Soc. \textbf{59} (1946), 467--507.}
\end{thebibliography}
\end{document}